\newtheorem*{corollary*}{Corollary}
\newtheorem*{theorem*}{Theorem}
\newtheorem{theorem}{Theorem}[section]
\newtheorem{corollary}[theorem]{Corollary}
\newtheorem{proposition}[theorem]{Proposition}
\newtheorem{lemma}[theorem]{Lemma}
\newtheorem{definition}[theorem]{Definition}
\theoremstyle{remark}
\numberwithin{remark}{subsection}
\numberwithin{equation}{section}
\renewcommand{\epsilon}{\varepsilon}
\renewcommand{\leq}{\leqslant}
\renewcommand{\geq}{\geqslant}
\renewcommand{\le}{\leqslant}
\renewcommand{\ge}{\geqslant}
\newcommand{\cB}{\mathcal{B}}
 \newcommand{\ind}{\mathbf{1}}
 \newcommand{\intmult}{\mathop{\int\dots\int}}
\newcommand{\E}{\mathbb{E}}
\newcommand{\cG}{\mathcal{G}}
\newcommand{\cL}{\mathcal{L}}
\newcommand{\R}{\mathbb{R}}
\renewcommand{\P}{\mathbb{P}}
\begin{document}

\title{A model problem for multiplicative chaos in number theory}

\author{Kannan Soundararajan}
\address{
Department of Mathematics \\ 
Stanford University   \\ 
450 Serra Mall, Building 380 \\
Stanford, CA \\
USA \\
94305}
\email{ksound@stanford.edu}

\author{Asif Zaman}
\address{
Department of Mathematics\\
University of Toronto \\
40 St. George Street, Room 6290 \\
Toronto, ON \\
CANADA \\
M5S 2E4}
\email{zaman@math.toronto.edu}

\date{\today}

\begin{abstract}  Resolving a conjecture of Helson, Harper recently established that partial sums of random multiplicative functions  
typically exhibit more than square-root cancellation.  Harper's work gives an example of a problem in number theory that is closely linked to ideas in probability theory connected with multiplicative chaos; another such closely related problem is the Fyodorov--Hiary--Keating conjecture on the maximum size of the Riemann zeta function in intervals of bounded length on the critical line.   In this paper we consider a problem that might be thought of as a simplified function field version of Helson's conjecture.  We develop and simplify the ideas of Harper in this context, with the hope that the simplified proof would be of use to readers seeking a gentle entry-point to this fascinating area.  
\end{abstract}

\maketitle

\vspace*{-15pt}
%
%
\section{Introduction}
\label{sec:Intro}

The aim of this article is to describe, in a simple setting, some recent work of Harper \cite{Harper-2020a} on the distribution of random multiplicative functions.  The study of random multiplicative functions has been very active in recent years, and turns out to be closely related to problems of ``multiplicative chaos" which have recently received attention in the probability literature.   On the number theory side, the study of mean values of random multiplicative functions is closely related to problems involving the size of the Riemann zeta function in short intervals of the critical line, a line of investigation originating in the conjectures of Fyodorov, Hiary and Keating \cite{FyodorovHiaryKeating-2012}.   Let us begin by quickly describing the model problem that we study here, and then giving its connections with the problem of random multiplicative functions.

Consider a sequence $(X(k))_{k \geq 1}$ of independent standard complex Gaussians; thus the real and imaginary parts of $X(k)$ are distributed like independent real Gaussian random variables with mean $0$ and variance $\tfrac 12$.   Define a sequence of random variables $(A(N))_{N \geq 0}$ by the formal power series identity
\begin{equation}	
\exp\Big(\sum_{k = 1}^{\infty} \frac{X(k)}{\sqrt{k}} z^k \Big) = \sum_{n =0}^{\infty} A(n) z^n. 
\label{1.1}
\end{equation}	
The random variables $A(n)$ are naturally determined by the independent random variables $X(k)$; for example, $A(0) = 1$, $A(1) = X(1)$, $A(2) = X(1)^2/2+X(2)/\sqrt{2}$, and so on.   With this notation, the main result that we wish to explain is the following. 

\begin{corollary} \label{cor1.1}  For all $N\ge 1$, 
\begin{equation} \label{1.2} 
\E[|A(N)|^2 ] =1. 
\end{equation} 
However there are positive constants $C_1$ and $C_2$ such that for $N\ge 2$ 
\begin{equation} \label{1.3} 
\frac{C_1}{(\log N)^{\frac 14}}\le \E [|A(N)|]  \le \frac{C_2}{(\log N)^{\frac 14}}. 
\end{equation} 
In particular, $\E[|A(N)|] \to 0$ as $N\to \infty$.  
\end{corollary}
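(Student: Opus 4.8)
The plan is to prove \eqref{1.2} by a one-line generating-function identity, and to prove \eqref{1.3} by carrying out, in the present Gaussian setting, Harper's multiplicative-chaos strategy: a Cauchy--Schwarz argument against a ballot-type barrier event for the upper bound, and a conditional second-moment argument for the lower bound. Throughout write $F(z)=\sum_{k\ge1}\frac{X(k)}{\sqrt k}z^k$, so $\sum_{n\ge0}A(n)z^n=\exp(F(z))$ and the series for $F$ converges almost surely for $|z|<1$. Since each $X(k)$ is a standard complex Gaussian, $\E\big[e^{aX(k)+b\overline{X(k)}}\big]=e^{ab}$, whence by independence, for $|z|,|w|<1$,
\[
\E\big[\exp\!\big(F(z)+\overline{F(w)}\big)\big]=\prod_{k\ge1}\exp\!\Big(\tfrac{(z\bar w)^k}{k}\Big)=\frac{1}{1-z\bar w}=\sum_{n\ge0}(z\bar w)^n.
\]
Expanding the left side as $\sum_{m,n\ge0}\E[A(m)\overline{A(n)}]\,z^m\bar w^n$ and comparing Taylor coefficients (the interchanges are routine for $|z|,|w|$ small) gives $\E[A(m)\overline{A(n)}]=\ind_{m=n}$, and in particular $\E[|A(N)|^2]=1$, which is \eqref{1.2}. (Alternatively, expand $A(N)$ over partitions of $N$, use $\E[|X(k)|^{2m}]=m!$, and recognise the result as $[z^N](1-z)^{-1}$.)

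For \eqref{1.3} I would fix $r=1-1/N$ (so $r^{-N}\asymp1$) and use $A(N)=r^{-N}\int_0^{2\pi}\exp\!\big(F(re^{i\theta})\big)e^{-iN\theta}\,\frac{d\theta}{2\pi}$. The Gaussian process $\theta\mapsto\Re F(re^{i\theta})$ has pointwise variance $-\tfrac12\log(1-r^2)=\tfrac12\log N+O(1)$ and covariance $-\tfrac12\log|1-r^2e^{i(\theta-\theta')}|\approx-\tfrac12\log|\theta-\theta'|$ for $|\theta-\theta'|\gg1/N$; thus $\sqrt2\,\Re F(re^{i\cdot})$ is essentially a log-correlated field on the circle of pointwise variance $\log N$, and $|\exp F(re^{i\theta})|^2\,\frac{d\theta}{2\pi}$ is, up to the deterministic factor $\asymp N$, a regularisation at scale $1/N$ of the \emph{critical} ($\gamma=\sqrt2$) Gaussian multiplicative chaos on the unit circle. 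It is this criticality --- equivalently, the classical estimate that a mean-zero random walk of $\asymp\log N$ steps stays below a linear barrier with probability only $\asymp(\log N)^{-1/2}$ --- that accounts for the exponent $\tfrac14$. The triangle inequality by itself gives only the useless $\E|A(N)|\le r^{-N}\int\E[e^{\Re F(re^{i\theta})}]\,\frac{d\theta}{2\pi}\asymp N^{1/4}$, so both bounds must exploit the oscillation $e^{-iN\theta}$ together with the correlations of $F$.

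For the upper bound, split $F=F_{\le K}+F_{>K}$ into low and high frequencies (with $K$ a parameter, say $K=\lfloor\sqrt N\rfloor$), factor $\exp(F)=\exp(F_{\le K})\exp(F_{>K})$, and set $V:=\frac1{2\pi}\int_0^{2\pi}|\exp F_{\le K}(e^{i\theta})|^2\,d\theta$, a positive function of $X(1),\dots,X(K)$ with $\E[V]\asymp K$. On the barrier event $\mathcal B^c$ that $\Re F_{\le j}(e^{i\theta})$ stays below a suitable barrier $\approx\log j$ for all $j\le K$ and $\theta$, Cauchy--Schwarz gives
\[
\E[|A(N)|\,\ind_{\mathcal B^c}]\le\big(\E[|A(N)|^2/V]\big)^{1/2}\big(\E[V\,\ind_{\mathcal B^c}]\big)^{1/2},
\]
where $\E[|A(N)|^2/V]=\E\big[V^{-1}\,\E[|A(N)|^2\mid X(\le K)]\big]\asymp1/K$ --- the conditional second moment is computed, exactly as for \eqref{1.2} but with the frequencies restricted to $k>K$, to be $\asymp V/K$ --- while $\E[V\,\ind_{\mathcal B^c}]\ll K(\log N)^{-1/2}$ by a Girsanov tilt (tilting by $|\exp F_{\le K}(e^{i\theta})|^2$ makes $\Re F_{\le j}(e^{i\theta})$ a walk drifting upward at rate $\approx\log j$, and the barrier estimate caps the tilted probability of staying below the barrier by $\asymp(\log N)^{-1/2}$). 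The factors $1/K$ and $K$ cancel, giving $\E[|A(N)|\,\ind_{\mathcal B^c}]\ll(\log N)^{-1/4}$; the complementary $\E[|A(N)|\,\ind_{\mathcal B}]$ is treated the same way, decomposing over the $\asymp\log N$ scales $\lfloor e^\ell\rfloor$ according to where the barrier is first violated and summing the first-passage estimates.

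For the lower bound --- which I expect to be the main obstacle --- condition on $\mathcal F=\sigma(X(1),\dots,X(K))$ and use the conditional Hölder inequality $\E[|A(N)|\mid\mathcal F]\ge S^{3/2}/Q^{1/2}$, where $S:=\E[|A(N)|^2\mid\mathcal F]\asymp V/K$ and $Q:=\E[|A(N)|^4\mid\mathcal F]$, so that
\[
\E|A(N)|\ge\E\!\left[\frac{S^{3/2}}{Q^{1/2}}\,\ind_{\mathcal G}\right]\gg\E[S^{1/2}\,\ind_{\mathcal G}]\qquad\text{provided }Q\ll S^2\text{ on }\mathcal G,
\]
where $\mathcal G$ is now the \emph{typical} event (probability $\asymp1$) that $\Re F_{\le j}(e^{i\theta})$ exceeds $\log j$ by at most a controlled margin. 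On $\mathcal G$ the conditional second moment $S\asymp V/K$ is, with probability bounded below, of its typical order $\asymp(\log N)^{-1/2}$ (the total mass of a critical chaos at scale $1/N$), so $\E[S^{1/2}\ind_{\mathcal G}]\gg(\log N)^{-1/4}$ and hence $\E|A(N)|\gg(\log N)^{-1/4}$. The crux is the conditional fourth-moment bound $Q\ll S^2$ on $\mathcal G$: \emph{unconditionally} $\E|A(N)|^4\asymp\log N$, far larger than $(\E|A(N)|^2)^2=1$ --- this spurious $\log N$ is exactly what makes a naive Paley--Zygmund argument yield only the worthless bound $(\log N)^{-1}$ --- and only after conditioning on the low frequencies and excising the spikes of the chaos does the fourth moment collapse to the square of the second. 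Establishing this collapse (controlling the off-diagonal terms in the fourth-moment sum on $\mathcal G$, and calibrating $K$ and the barrier so that neither this estimate nor the ballot lower bound is lost) is the principal difficulty, and doing it cleanly in the Gaussian model is the heart of the matter.
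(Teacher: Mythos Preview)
Your proof of \eqref{1.2} via the generating-function identity $\E[e^{F(z)+\overline{F(w)}}]=(1-z\bar w)^{-1}$ is correct and amounts to the same computation the paper does (the paper expands over partitions and recognises the permutation count, which is your parenthetical alternative).

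For \eqref{1.3} your outline has the right architecture --- barrier events for the partial sums of $\Re F$, Girsanov tilting, and the ballot input $\asymp(\log N)^{-1/2}$ --- and this is indeed the skeleton of the paper's argument. The implementations differ in two places worth flagging.

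\emph{Upper bound.} The paper does not run your weighted Cauchy--Schwarz against $V^{-1/2}$. Instead it first decomposes $A(N)=\sum_{j\le J} A_j(N)+\widetilde A_J(N)$ according to the \emph{largest part} of the underlying partition (dyadic scales $N/2^j<\lambda_1\le N/2^{j-1}$), applies Minkowski, and for each $j$ computes the conditional second moment $\E_j[|A_j(N)|^2]$ directly, bounding it by $N^{-1}\int_0^{2\pi}|F_{N/2^j}(e^{j/N+i\theta})|^2\,d\theta$. This reduces everything to $q$-th moments of $\int|F_K|^2$ with $K\asymp N$, and the barrier/ballot argument is then carried out on that integral. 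Your route with $K=\sqrt N$ and the single split $F=F_{\le K}+F_{>K}$ can be made to work (your claim $\E[|A(N)|^2\mid X(\le K)]\ll V/K$ is justified because the probability that a random permutation of $[n]$ has no cycle of length $\le K$ is $O(1/K)$ uniformly in $n\ge1$, and the exceptional term $|B(N)|^2$ has expectation of Dickman size $\rho(\sqrt N)$), but the paper's largest-part decomposition replaces your ``decompose the bad event over where the barrier is first violated'' by a clean finite sum over $j$.

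\emph{Lower bound.} Here the paper takes a genuinely simpler path than your conditional fourth-moment proposal. Rather than establish $Q\ll S^2$ on a good event, the paper writes $A(N)=A_1(N)+\widetilde A_1(N)$ with $A_1(N)=\sum_{N/2<n\le N}X(n)A(N-n)/\sqrt n$, which is \emph{conditionally complex Gaussian} given $(X(k))_{k\le N/2}$. Symmetrising $X(n)\mapsto -X(n)$ for $n>N/2$ gives $\E|A(N)|^{2q}\ge\tfrac12\E|A_1(N)|^{2q}$, and for a Gaussian the $2q$-th moment is a constant times the $q$-th power of the variance. This yields immediately
\[
\E|A(N)|^{2q}\gg\E\Big[\Big(\tfrac1N\sum_{n<N/2}|A(n)|^2\Big)^q\Big],
\]
with no fourth-moment work at all, and then a Parseval step reduces to the lower bound for $\E[(\int|F_{N/2}|^2)^q]$, which is handled by exactly the H\"older/second-moment argument on the barrier set that you describe. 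Your $Q\ll S^2$ route is essentially Harper's original argument over $\Z$ and should go through here too, but the symmetrisation-plus-Gaussianity trick is precisely the simplification that this model affords and is what the paper exploits.
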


As mentioned already, the result above is motivated by a breakthrough of Harper on the partial sums of random multiplicative functions.   A random Steinhaus multiplicative function $f: {\Bbb N} \to \{ |z| =1\}$ is obtained by picking independent random variables $f(p)$ (for prime numbers $p$) distributed uniformly on the unit circle, and extending this to all natural numbers by (complete) multiplicativity.  Thus if $n= p_1^{\alpha_1} \cdots p_k^{\alpha_k}$ then $f(n)$ is the random variable $f(p_1)^{\alpha_1} \cdots f(p_k)^{\alpha_k}$.   Given such a random multiplicative function, an important goal is to understand the partial sums $\sum_{n\le x} f(n)$.   Since $\E[f(m) \overline{f(n)}] = 1$ if $m=n$ and $0$ otherwise it follows that 
\begin{equation} \label{1.4} 
\E \Big[ \Big|\sum_{n\le x} f(n)\Big|^2\Big]  = \lfloor x \rfloor = x+O(1). 
\end{equation} 
Harper showed that even though the variance is about $x$, surprisingly the typical size of  $\sum_{n\le x} f(n)$ is smaller than $\sqrt{x}$: 
\begin{equation} 
\label{1.5} 
\E \Big[ \Big| \sum_{n\le x} f(n) \Big| \Big] \asymp \frac{\sqrt{x}}{(\log \log x)^{\frac 14}}.
\end{equation} 
Here the relation $A \asymp B$ means that $C_1 B \le A \le C_2 B$ for some absolute positive constants $C_1$ and $C_2$.  Harper's result established the conjecture of Helson \cite{Helson-2010} that partial sums of random multiplicative functions typically exhibit more than square-root cancellation; the truth of Helson's conjecture seemed far from clear at the time, and indeed earlier work of Harper, Nikeghbali and Radziwi{\l \l} \cite{HarperNikeghbaliRadziwil-l-2015} had suggested the opposite of Helson's conjecture.  

Identifying $N$ with $\log x$, we see a strong parallel between the variances given in \eqref{1.2} and \eqref{1.4}, and the estimates for the first moment in \eqref{1.3} and \eqref{1.5}.  For large $k$, the quantity $\sum_{e^{k} < p \le e^{k+1}} f(p)$ behaves like a complex Gaussian with mean $0$ and variance 
$\sum_{e^k <p \le e^{k+1}} 1$, which by the prime number theorem is on the scale of $e^{k}(e-1)/k$.   After normalization, the sum over primes behaves analogously to $X(k)/\sqrt{k}$.   Then the random variable $A(N)$ is analogous to $e^{-N/2} \sum_{n\le e^{N}} f(n)$.   The key relation of the generating functions \eqref{1.1}   is paralleled by the Euler product formula 
$$
\sum_{n=1}^{\infty} \frac{f(n)}{n^s} = \prod_{p} \Big( 1- \frac{f(p)}{p^s}\Big)^{-1} = \exp\Big( \sum_{p^k}  \frac{f(p^k)}{kp^{ks}}\Big). 
$$ 

The analogy between our model problem and Harper's work is perhaps clearer in the ``function field setting."  Consider the polynomial ring ${\Bbb F}_q[t]$ where $q$ is a prime power, and ${\Bbb F}_q$ is a finite field with $q$ elements.  Many problems in the integers have close parallels in this polynomial ring, and for example a study of multiplicative functions in this framework may be found in \cite{GranvilleHarperSoundararajan-2015}.   The role of positive integers is played by ${\mathcal M}$, the set of monic polynomials.  Let ${\mathcal M}_n$ denote the monic polynomials of degree $n$, so that $|{\mathcal M}_n| =q^n$, which mirrors integers of size about $x$.   The role of primes is played by ${\mathcal P}$, the set of irreducible monic polynomials.  Letting ${\mathcal P}_n$ denote the monic irreducibles of degree $n$, there is also a well-known analogue of the prime number theorem (indeed of the Riemann hypothesis): $|{\mathcal P}_n| = q^n/n + O(q^{n/2}/n)$.    We can model Steinhaus multiplicative functions in this setting by considering (for monic irreducibles $P$) independent random variables $f(P)$ uniformly distributed on the unit circle, and then extending these to ${\mathcal M}$ by complete multiplicativity:  if $F = P_1^{\alpha_1} \cdots P_{k}^{\alpha_k}$ then put $f(F) = f(P_1)^{\alpha_1} \cdots f(P_k)^{\alpha_k}$.  
 
 In the function field context, our goal is to understand $\sum_{F \in {\mathcal M}_N} f(F)$.  If we set $A(n) = q^{-n/2} \sum_{F \in {\mathcal M}_n} f(F)$, then 
 $$ 
\sum_{n=0}^{\infty} A(n) z^n = \sum_{F \in {\mathcal M}} f(F) (q^{-1/2} z)^{\text{deg}(F)} = 
\prod_{P} \Big( 1- f(P)(q^{-\frac 12} z)^{\text{deg}(P)} \Big)^{-1}. 
$$
Writing 
$$ 
X(k) = \frac{\sqrt{k}}{q^{k/2}} \sum_{\substack{ \text{deg}(P)|k \\ r=k/\text{deg}(P) }} \frac{f(P)^{r}}{r}, 
$$ 
we may see that the generating function above equals  
$$
  \exp\Big( \sum_{P} \sum_{r=1}^{\infty} \frac{1}{r} f(P)^r (q^{-\frac 12} z)^{\text{deg}(P^r)}\Big) 
  = \exp\Big( \sum_{k=1}^{\infty} \frac{X(k)}{\sqrt{k}} z^k\Big). 
 $$
  This relation mirrors \eqref{1.1}.  Moreover, note that if $q^k$ is large then $X(k)$ is a (normalized) sum of about $q^k/k$ independent random variables uniformly distributed on the unit circle, so that $X(k)$ is distributed very nearly like a standard complex Gaussian.   In this manner we see that our model problem corresponds approximately to the study of Steinhaus multiplicative functions in the ${\Bbb F}_q[t]$ setting, and corresponds exactly to the limiting case when $q\to \infty$.

  Perhaps one of the earliest occurrences of the model \eqref{1.1} is in  the work of Hughes, Keating, O'Connell \cite{HughesKeatingOConnell-2001} where it arises as a
  prototypical example of a  log-correlated field.  Our interest arose in trying to simplify and understand Harper's work, and while we lectured on these results earlier (see for example \cite{BrudernMatomakiVaughanWooley-2020}), we have been slow with writing up this version.  In the meantime, independent work of Najnudel, Paquette and Simm \cite[(1.14) and Lemma 7.5]{NajnudelPaquetteSimm-2020} motivated by random matrix theory studies more general versions of this model (which they term ``holomorphic multiplicative chaos"), establishing the upper bounds in \cref{cor1.1} (and \cref{thm:main} below).

  \medskip 
  
 
We conclude our introduction with a brief discussion of related  work.   In addition to the Steinhaus model of random multiplicative functions mentioned above, another natural model is the Rademacher class of random multiplicative functions taking values $\pm 1$ (independently and with equal probability) on the primes, and extended multiplicatively to all square-free numbers.  Harper \cite{Harper-2020a} also established the analogue of \eqref{1.5} in this class.  Indeed as we shall discuss in the next section, Harper \cite{Harper-2020a} determined the order of magnitude of the $2q$-th moment of partial sums for all $0\le q \le 1$, with the key feature being that the low moments exhibit more cancellation than what would be obtained by using H{\" o}lder's inequality with the second moment.   The complementary range of high moments is studied by Harper in \cite{Harper-2019a}.   While the partial sums of random multiplicative functions are typically smaller than expected, there are variant problems where the behavior follows expected Gaussian laws.  For example, the sums of random multiplicative functions over suitable short intervals or suitable arithmetic progressions \cite{ChatterjeeSoundararajan-2012}, or when restricted to integers without too many prime factors  \cite{Harper-2013}, \cite{Hough-2011}, exhibit Gaussian behavior.  

Given a random Steinhaus multiplicative function $f$, one can ask whether almost surely one has $\sum_{n\le x} f(n) =O(\sqrt{x})$  for all $x$.  That is, here we are choosing the multiplicative function $f$ at random, and asking about the behavior of partial sums as $x$ varies, in contrast with our earlier discussion where $x$ is first fixed and the random multiplicative function varies.  This problem, which is an analogue of the law of the iterated logarithm, was raised by Hal{\' a}sz \cite{Halasz-1983}, and investigated further in \cite{Harper-2013a} and \cite{LauTenenbaumWu-2013}.  Recently 
Hal{\' a}sz's problem was answered in the negative by Harper \cite{Harper-2020b}, who established that almost surely there are arbitrarily large $x$ with $|\sum_{n\le x} f(n) |\ge \sqrt{x} (\log \log x)^{\frac 14-\epsilon}$ for any $\epsilon >0$.  The law of the iterated logarithm shows that sums of independent random variables (for example taking values $\pm 1$ with equal probability) attain values as large as $\sqrt{x \log \log x}$ occasionally, and Harper's result differs from this by about the same amount $(\log \log x)^{-\frac 14}$ that appears in \eqref{1.5}.   Harper's result suggests that in our model problem, almost surely there exist arbitrarily large $N$ with  $|A(N)| \ge (\log N)^{\frac 14-\epsilon}$.  It would be of interest to make this precise, and perhaps obtain a more accurate law of the iterated logarithm in this context.

 Another problem in number theory that is closely related to this circle of ideas concerns the distribution of the Riemann zeta function over typical  intervals of length $1$ on the critical line Re$(s)=1/2$.  One vague connection between these problems is that $\zeta(\tfrac 12 +it)$ may be thought of as $\sum n^{-\frac 12 -it}$ for suitable ranges of $n$, and if $t$ is chosen randomly, the function $n^{it}$ behaves in some ways like a random Steinhaus multiplicative function.  More precisely, a conjecture of Fyodorov, Hiary, and Keating (see  \cite{FyodorovHiaryKeating-2012}, \cite{FyodorovKeating-2014}) states that if $t$ is chosen uniformly from $[T,2T]$ then  
 \begin{equation} 
 \label{1.6}
\log \log T -\tfrac 34 \log \log \log T -g(T) \le  \max_{t\le u\le t+1} \log |\zeta(\tfrac 12+iu)| \le \log \log T - \tfrac 34 \log \log \log T + g(T),
 \end{equation} 
 holds with probability tending to $1$ as $g(T)$ tends to infinity with $T$.   The key feature of this conjecture is the secondary term $-\frac 34 \log \log \log T$, which is smaller than the answer $-\frac 14 \log \log \log T$ that may be suggested by a crude application of Selberg's classical theorem that $\log |\zeta(\tfrac 12+it)|$ is distributed like a normal random variable with mean $0$ and variance $\sim \tfrac 12 \log \log T$.  Another closely related conjecture states that 
 \begin{equation} 
 \label{1.7} 
 \frac 1T \int_{T}^{2T} \Big( \frac 1{\log T} \int_0^1 |\zeta(\tfrac 12+it+ ih)|^2 dh \Big)^{\frac 12} \asymp \frac{1}{(\log \log T)^{\frac 14}}. 
 \end{equation} 
 Since $\int_T^{2T} |\zeta(\tfrac 12 +it)|^2 dt \sim T\log T$, the Cauchy--Schwarz inequality shows that the above quantity is $\ll 1$, and the interest above is that it is still smaller, and by a factor very similar to that arising in \eqref{1.5}.  Indeed there is a very strong analogy between \eqref{1.7} and \cref{prop3.2,prop8.2} below.   There has been a lot of recent progress towards the conjectures in \eqref{1.6} and \eqref{1.7} and other related questions, see   \cite{ArguinBeliusBourgadeRadziwil-lSoundararajan-2019},  \cite{ArguinBeliusHarper-2017}, \cite{ArguinBourgadeRadziwi-2020}, \cite{ArguinOuimetRadziwi-2019}, \cite{Gerspach-2020},  \cite{Harper-2020b}, \cite{Najnudel-2018}.   Most notably, the upper bound portion of \eqref{1.7} has been established by Harper \cite{Harper-2019b}, who also established a  slightly weaker version of the upper bound in \eqref{1.6}.  An even more precise version of the upper bound in \eqref{1.6} has been established by Arguin, Bourgade and Radziwi{\l \l}  \cite{ArguinBourgadeRadziwi-2020}.   For a recent comprehensive survey on this topic see \cite{BaileyKeating-2021}.
 
 The multiplication table problem (which asks for the number of integers $n$ up to $N^2$ that may be written as $n=ab$ with 
$a, b\le N$) exhibits some features in common with these problems, although the link here is perhaps less clearly defined.  We content ourselves with referring the reader to \cite{Ford-2008}, and pointing out also the interesting analogous problem of counting permutations in $S_{2N}$ that leave some $N$-element set fixed (so that such permutations may be thought of as the product of two permutations on $N$ element sets) \cite{EberhardFordGreen-2016}. 

Lastly, we mention that there is an extensive literature in probability where related problems are studied under ``critical multiplicative chaos"; a few sample references are  \cite{Berestycki-2017},   \cite{ChhaibiNajnudel-2019}, \cite{DuplantierRhodesSheffieldVargas-2014}, \cite{RhodesVargas-2014}. 

\medskip 

\subsection*{Acknowledgments}   K.S. is partially supported through a grant from the National Science Foundation, and a Simons Investigator Grant from the Simons Foundation.  Part of this work was written while K.S. was a senior Fellow at the ETH Institute for Theoretical Studies, whom he thanks for their warm and generous hospitality. Part of this work was written while A.Z. was a postdoctoral fellow at Stanford University supported by an NSERC Postdoctoral Fellowship. A.Z. is grateful to both institutions for their financial support and to the university for providing excellent working conditions. A.Z. also thanks Valeriya Kovaleva for helpful comments and sharing reference \cite{NajnudelPaquetteSimm-2020} with him. 

 %
%
\section{Preliminaries} 

In this section we set up some convenient notation, and make preliminary observations for our analysis of $A(N)$.

  By a partition $\lambda$ we mean a non-increasing sequence of 
non-negative integers $\lambda_1 \ge \lambda_2 \ge \ldots$, with $\lambda_n =0$ from some point onwards.   We denote by $|\lambda|$ the sum of the parts $\lambda_1 + \lambda_2 + \ldots$, and for an integer $k\ge 1$ we denote by $m_k =m_k(\lambda)$ the number of parts in $\lambda$ that are equal to $k$.  Given a partition $\lambda$, define 
\begin{equation} 
\label{2.1} 
a(\lambda) = a(\lambda;X) = \prod_k \Big(\frac{X(k)}{\sqrt{k}}\Big)^{m_k} \frac{1}{m_k!},  
\end{equation} 
where, as in the introduction, $X(k)$ is a sequence of independent standard complex Gaussians. 
With this notation, we have 
$$ 
\exp\Big(\sum_{k = 1}^{\infty} \frac{X(k)}{\sqrt{k}} z^k \Big) = \sum_{\lambda} a(\lambda) z^{|\lambda|}, 
$$ 
so that 
\begin{equation} 
\label{2.2} 
A(N) = \sum_{|\lambda|= N} a(\lambda). 
\end{equation}

Recall that a standard complex Gaussian $Z$ satisfies 
$$ 
\E \Big[ Z^m \overline{Z}^n \Big] = 
\begin{cases} 
n! &\text{if } m=n, \\ 
0 &\text{if } m\neq n. 
\end{cases}
$$  
 It follows that if $\lambda$ and $\lambda'$ are two different partitions then 
 \begin{equation}
 \E [ a(\lambda) \overline{a(\lambda^\prime)} ] =0, 	
 \label{eqn:PartitionOrthogonality}
 \end{equation}
 while if $\lambda = \lambda'$ then 
 \begin{equation}
 \label{eqn:PartitionSquare}
 \E [ |a(\lambda)|^2 ]  = \prod_{k} \frac{1}{m_k!^2 k^{m_k}} \E [ |X(k)|^{2m_k} ] = \prod_{k} \frac{1}{m_k! k^{m_k}}, 
 \end{equation}
 where we again use the notation that the partition $\lambda$ contains $m_k$ parts equal to $k$.  
 We deduce that 
  \begin{equation*} 
\E [|A(N)|^2] = \sum_{|\lambda|= N} \E [ |a(\lambda)|^2 ]= \sum_{|\lambda|= N} \prod_{k} \frac{1}{m_k! \cdot k^{m_k}} = 1, 
\end{equation*} 
where the last step follows from the familiar formula for the number of permutations in $S_N$ whose cycle decomposition corresponds to the partition $\lambda$.  
This establishes \eqref{1.2}, and our main task is to bound the first moment in \eqref{1.3}.  

In fact we will determine the order of magnitude of all low moments $\E[|A(N)|^{2q}]$ for $0\le q\le 1$, following Harper who determined the order of magnitude for such moments for random multiplicative functions. 
 
\begin{theorem} \label{thm:main}
	Uniformly for any integer $N \geq 1$ and any $0 \leq q \leq 1$, 
	\[
	\E \big[ |A(N)|^{2q} \big] \asymp \Big(\frac{1}{(1-q) \sqrt{\log N} +1}\Big)^q. 
	\]	
	 In particular \eqref{1.3} holds.  
\end{theorem}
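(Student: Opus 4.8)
The plan is to adapt, in this cleaner Gaussian setting, Harper's strategy for partial sums of random multiplicative functions, which separates an ``arithmetic'' reduction from a ``probabilistic'' barrier argument; the precise denominator $(1-q)\sqrt{\log N}+1$ will emerge from an optimisation in the latter. In outline: first reduce $\E[|A(N)|^{2q}]$ to a $q$-th moment of a critical multiplicative chaos of depth $\asymp\log N$, and then analyse that moment via a branching-random-walk / ballot argument.

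For the reduction, fix $y$ slightly below $N$, say $y=\lfloor N/(\log N)^{2}\rfloor$. Splitting each partition $\lambda$ of $N$ into its parts $\le y$ and its parts $>y$ and summing \eqref{2.2} accordingly gives $A(N)=\sum_{m+n=N}B_y(m)C_y(n)$, where $B_y(m)$ is $\mathcal{F}_y:=\sigma(X(1),\dots,X(y))$-measurable, the $C_y(n)$ depend only on $X(y+1),\dots,X(N)$, and these two families are independent. Using the orthogonality relations \eqref{eqn:PartitionOrthogonality}--\eqref{eqn:PartitionSquare} together with the generating identity $\sum_n\E[|C_y(n)|^2]z^n=(1-z)^{-1}\exp(-\sum_{k\le y}z^k/k)$ — so that $\E[|C_y(n)|^2]\asymp 1/y$ throughout the range $n\in[N-y,N]$ that contributes, and $\E[|C_y(n)|^2]=0$ for $1\le n\le y$ — one computes
\[
\E\big[|A(N)|^2\mid\mathcal{F}_y\big]\;\asymp\;\mathcal{W}_N:=\frac1{y}\sum_{m\le N}|B_y(m)|^2\;\approx\;\frac1y\int_0^{2\pi}\big|\mathcal{E}_y(\theta)\big|^2\,\frac{d\theta}{2\pi},\qquad \mathcal{E}_y(\theta)=\exp\Big(\sum_{k\le y}\frac{X(k)}{\sqrt k}e^{ik\theta}\Big),
\]
and $\E[\mathcal{W}_N]\asymp1$. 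By the conditional Jensen inequality, $\E[|A(N)|^{2q}\mid\mathcal{F}_y]\le\big(\E[|A(N)|^2\mid\mathcal{F}_y]\big)^q\asymp\mathcal{W}_N^{\,q}$, hence $\E[|A(N)|^{2q}]\ll\E[\mathcal{W}_N^{\,q}]$; the reverse inequality is obtained by a conditional Paley--Zygmund argument, which reduces to bounding $\E[|A(N)|^4\mid\mathcal{F}_y]$ by $O(\mathcal{W}_N^{\,2})$ on a good event. This two-sided reduction is the model analogue of the conjecture \eqref{1.7} and is the content of \cref{prop3.2,prop8.2}; it then remains to prove $\E[\mathcal{W}_N^{\,q}]\asymp\big((1-q)\sqrt{\log N}+1\big)^{-q}$.

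To analyse $\E[\mathcal{W}_N^{\,q}]$, partition $\{1,\dots,y\}$ into $J\asymp\log N$ consecutive blocks $I_j$ with $\sum_{k\in I_j}1/k\asymp1$ (e.g.\ $I_j=(e^{j-1},e^{j}]\cap\Z$), and let $\mathcal{E}^{(j)}(\theta)$ be the partial product of $\mathcal{E}_y$ over $I_1\cup\dots\cup I_j$; then $(\mathcal{E}^{(j)}(\theta))_j$ is a martingale, $\log|\mathcal{E}^{(j)}(\theta)|$ is, for fixed $\theta$, a mean-zero random walk with $O(1)$ increments, and $\theta\mapsto\log|\mathcal{E}^{(J)}(\theta)|$ is an approximately log-correlated field of depth $J$ sitting at the critical point (the pointwise second moment of $|\mathcal{E}_y|^2$ is $\asymp y\asymp e^{J}$) — this is the source of the power $(\log N)^{1/4}$ in \eqref{1.3}. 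For the upper bound on $\E[\mathcal{W}_N^{\,q}]$ one introduces, for $u\ge0$, the barrier event $\mathcal{G}_u$ that the walk $j\mapsto\log|\mathcal{E}^{(j)}(\theta)|$ stays below its natural linear profile raised by $u$, for all $\theta$; on $\mathcal{G}_u$ the chaos is effectively subcritical, so a first-moment computation together with a ballot estimate bounds $\E[\mathcal{W}_N\mathbf{1}_{\mathcal{G}_u}]$, while on $\mathcal{G}_u^c$ one controls $\mathcal{W}_N^{\,q}$ using the sub-exponential tails of the increments. Splitting $\mathcal{W}_N^{\,q}=\mathcal{W}_N^{\,q}\mathbf{1}_{\mathcal{G}_u}+\mathcal{W}_N^{\,q}\mathbf{1}_{\mathcal{G}_u^c}$, bounding the good term by H\"older as $\E[\mathcal{W}_N^{\,q}\mathbf{1}_{\mathcal{G}_u}]\le\E[\mathcal{W}_N\mathbf{1}_{\mathcal{G}_u}]^q\,\P(\mathcal{G}_u)^{1-q}$, summing over a dyadic range of $u$, and optimising $u$ against $q$ — while simply using $\E[\mathcal{W}_N^{\,q}]\le(\E\mathcal{W}_N)^q\asymp1$ in the narrow range where $q$ is within $O(1/\sqrt{\log N})$ of $1$ — produces the claimed upper bound. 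The lower bound on $\E[\mathcal{W}_N^{\,q}]$ comes from a second-moment argument for $\mathcal{W}_N$ restricted to the ballot event, whose crux is an estimate for $\E[\mathcal{W}_N^2\mathbf{1}_{\mathcal{G}_u}]$ (the unrestricted second moment, of order $y$, being useless) matching $\E[\mathcal{W}_N\mathbf{1}_{\mathcal{G}_u}]^2$ up to the ballot factor.

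The heart of the matter — and the main obstacle — is the barrier analysis of the martingale of random Euler products $\mathcal{E}^{(j)}$: one must turn the branching-random-walk heuristic into rigorous ballot estimates, extracting the \emph{precise} saving of order $(\log N)^{-1/2}$ together with its exact dependence on $q$ rather than a lossy $(\log N)^{-\varepsilon}$, both in the first-moment computation above and in the conditional fourth-moment estimate underlying the two lower-bound steps. Ensuring that the two regimes of $\big((1-q)\sqrt{\log N}+1\big)^{-q}$ glue correctly across the transition at $q\approx1-1/\sqrt{\log N}$ is the remaining delicate point.
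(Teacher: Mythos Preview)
Your barrier/ballot analysis of the chaos moment $\E[\mathcal W_N^{\,q}]$ matches the paper's (Propositions~5.2--5.3 for the upper bound, Propositions~10.2--11.1 for the lower), and your upper-bound reduction via conditional Jensen with a single threshold $y=N/(\log N)^2$ is a viable alternative to the paper's dyadic decomposition in Proposition~3.1; it trades the paper's iterated Minkowski argument for the Buchstab-type uniformity $\E[|C_y(n)|^2]\asymp 1/y$ on $y<n\le N$.

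The genuine gap is in the lower-bound reduction. Your conditional Paley--Zygmund step needs $\E[|A(N)|^4\mid\mathcal F_y]\ll\mathcal W_N^{\,2}$, but with $y=N/(\log N)^2$ this fails by a factor $(N/y)^2=(\log N)^4$. Writing $A(N)=\sum_n B_y(N{-}n)C_y(n)$ and expanding the conditional fourth moment via
\[
\E\big[C_y(n_1)C_y(n_2)\overline{C_y(n_3)C_y(n_4)}\big]=\sum_{\substack{a_{13}+a_{14}=n_1,\ a_{23}+a_{24}=n_2\\ a_{13}+a_{23}=n_3,\ a_{14}+a_{24}=n_4}}\ \prod_{i,j}p(a_{ij},y),
\]
the off-diagonal block with all $a_{ij}>y$ contributes $\asymp y^{-4}\sum_{m_1,m_2}(N{-}m_1)(N{-}m_2)|B_y(m_1)|^2|B_y(m_2)|^2$, and since the mass of $|B_y(m)|^2$ sits at $m\ll y$ this is $\asymp (N/y)^2\mathcal W_N^{\,2}$. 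No $\mathcal F_y$-measurable ``good event'' removes this loss: it is structural to the choice of a small threshold and to the Gaussian (rather than Steinhaus) nature of the large variables, whose chaos of degree $\gg 1$ has fourth moment genuinely larger than the squared second moment.

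The paper avoids this entirely. In Proposition~8.1 the threshold is $N/2$, so at most one part exceeds it; then $A_1(N)=\sum_{N/2<n\le N}\tfrac{X(n)}{\sqrt n}A(N{-}n)$ is \emph{exactly} complex Gaussian conditionally on $\mathcal F_{N/2}$, its $2q$-th moment equals $C_q\big(\E[|A_1(N)|^2\mid\mathcal F_{N/2}]\big)^q$ with $C_q\ge\tfrac12$, and the symmetry $X(n)\mapsto -X(n)$ gives $\E[|A(N)|^{2q}]\ge\tfrac12\E[|A_1(N)|^{2q}]$ with no fourth-moment input whatsoever. If you adopt $y\asymp N$ to repair your lower bound, you then face a non-negligible ``all parts $\le y$'' tail in the upper bound, which is exactly why the paper's upper-bound reduction is dyadic rather than single-threshold.
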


Naturally one can study $A(N)$ through the generating function $\sum_{n=0}^{\infty} A(n)z^n$, which converges almost surely for $|z| <1$.  For example, by Cauchy's theorem we have for $r<1$ the almost sure identity 
$$ 
A(N) = \frac{1}{2\pi i} \int_{|z|=r} \sum_{n=0}^{\infty} A(n) z^n \frac{dz}{z^{N+1}}.   
$$ 
Indeed since $A(N)$ depends only on the random variables $X(k)$ for $k\le N$, we can avoid all issues of convergence and write (for any $K\ge N$ and any $r>0$) 
$$ 
A(N) = \frac{1}{2\pi i} \int_{|z|=r} \exp\Big( \sum_{k \leq K} \frac{X(k)}{\sqrt{k}} z^k \Big) \frac{dz}{z^{N+1}}. 
$$ 
We will not   use this relation, but it motivates us to define (for any real number $K\ge 1$) 
\begin{equation} 
\label{2.3} 
F_K(z) = \exp\Big( \sum_{k\le K} \frac{X(k)}{\sqrt{k}} z^k \Big). 
\end{equation}  
When the parameter $K$ is clear from context, we shall abbreviate $F_K(z)$ to $F(z)$.

We shall relate the problem of bounding $\E [|A(N)|^{2q}]$ to that of estimating 
\begin{equation} 
\label{2.4} 
\E \Big[ \Big( \frac{1}{2\pi} \int_0^{2\pi} |F_K(re^{i\theta})|^2 d\theta \Big)^{q} \Big]. 
\end{equation} 
Here $K$ will be a parameter of size about $N$, and $r$ will be a parameter close to $1$.  

\begin{lemma} \label{lem3}  For any $K\ge 1$, any $r>0$, and any $\theta \in \R$, we 
have 
$$ 
\E [ |F_K(re^{i\theta})|^2 ] = \exp\Big( \sum_{k\le K} \frac{r^{2k}}{k}\Big). 
$$ 
\end{lemma}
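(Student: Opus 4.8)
The plan is to compute the expectation directly, exploiting the independence of the $X(k)$ and the moment generating function of a real Gaussian. First I would expand $|F_K(re^{i\theta})|^2 = F_K(re^{i\theta})\,\overline{F_K(re^{i\theta})}$ using the definition \eqref{2.3}. Writing $z = re^{i\theta}$, so that $z^k = r^k e^{ik\theta}$ and $\overline{z}^{\,k} = r^k e^{-ik\theta}$, this gives
\[
|F_K(re^{i\theta})|^2 = \exp\Big( \sum_{k\le K} \Big( \frac{r^k e^{ik\theta}}{\sqrt k} X(k) + \frac{r^k e^{-ik\theta}}{\sqrt k} \overline{X(k)} \Big) \Big).
\]
Since the $X(k)$ are independent, the expectation factors as a product over $k \le K$ of terms $\E\big[\exp\big(w_k X(k) + \overline{w_k}\, \overline{X(k)}\big)\big]$, where $w_k = r^k e^{ik\theta}/\sqrt k$.

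Next I would evaluate each factor. Writing $X(k) = U + iV$ with $U, V$ independent real Gaussians of mean $0$ and variance $\tfrac12$, and $w_k = a+ib$, one has $w_k X(k) + \overline{w_k}\, \overline{X(k)} = 2\Re(w_k X(k)) = 2aU - 2bV$. Using the elementary identity $\E[e^{tU}] = e^{t^2/4}$ for a real Gaussian of variance $\tfrac12$, together with the independence of $U$ and $V$, this yields $\E\big[\exp\big(w_k X(k) + \overline{w_k}\, \overline{X(k)}\big)\big] = e^{a^2} e^{b^2} = e^{|w_k|^2} = \exp(r^{2k}/k)$. Multiplying these over $k \le K$ gives $\E[|F_K(re^{i\theta})|^2] = \exp\big( \sum_{k\le K} r^{2k}/k \big)$, as claimed.

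There is no real obstacle here; the only point needing a little care is the bookkeeping of real and imaginary parts in the Gaussian moment computation, together with the observation that the phase $e^{ik\theta}$ disappears because only $|w_k|^2$ survives --- which is precisely why the answer does not depend on $\theta$. An essentially equivalent route would be to note that $\sum_{k\le K} \tfrac{X(k)}{\sqrt k} z^k$, evaluated at $z = re^{i\theta}$, is a centered complex Gaussian $W$ with $\E[W^2]=0$, and to apply the general formula $\E[e^{2\Re W}] = e^{\E[|W|^2]}$; but the term-by-term computation above is the most transparent and self-contained, and I would present it that way.
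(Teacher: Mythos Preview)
Your proof is correct and amounts to the same computation as the paper's. The only organizational difference is that the paper first invokes rotational invariance (replacing $X(k)$ by $X(k)e^{ik\theta}$) to reduce to $\theta=0$ and then treats $\mathrm{Re}\sum_{k\le K}X(k)r^k/\sqrt{k}$ as a single real Gaussian, whereas you factor over $k$ and see the phase $e^{ik\theta}$ disappear term by term; your closing remark about the ``equivalent route'' is exactly the paper's argument.
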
 
\begin{proof}  Since the complex Gaussian is rotationally symmetric, the variables $X(k)$ and $X(k)e^{ik\theta}$ are identically distributed and therefore $|F_K(re^{i\theta})|^2$ is distributed identically as $|F_K(r)|^2$.  Now $\text{Re } \sum_{k\le K} X(k)r^k/\sqrt{k}$ is a sum of independent Gaussians, and therefore is distributed like a real Gaussian with mean $0$ and variance $\frac 12 \sum_{k\le K} r^{2k}/k$.  The lemma follows upon recalling that if $Z$ is a real Gaussian with mean zero and variance $\sigma^2$ then $\E [e^{tZ}] =  e^{t^2 \sigma^2/2}$. 
\end{proof} 

From \cref{lem3} and H{\" o}lder's inequality it follows that 
\begin{equation} 
\label{2.5} 
\E \Big[ \Big(\frac{1}{2\pi} \int_0^{2\pi} |F_K(re^{i\theta})|^2 d\theta\Big)^q \Big] \le \Big( \E \Big[ \frac{1}{2\pi} \int_0^{2\pi} |F_K(re^{i\theta})|^2 d\theta \Big] \Big)^q = \exp\Big( q\sum_{k\le K} \frac{r^{2k}}{k} \Big).  
\end{equation} 
Thinking of $r=1$ and $K=N$ for simplicity (this is the most relevant range of the parameters) this furnishes an upper bound of size $N^q$ above.  The true size of the quantity on the left side above turns out to be a little bit smaller, by a factor $(1+(1-q)\sqrt{\log N})^q$ exactly as in \cref{thm:main}.   We point out that there are close parallels between the moment in \eqref{2.5} (for $q=1/2$) and the corresponding problem for 
$|\zeta(\tfrac 12+it)|$ considered in \eqref{1.7}.

Ultimately the smaller size of the left side of \eqref{2.5} can be traced to the ``ballot problem" in probability theory.   We will borrow from Harper  the following extension of classical results on Gaussian random walks.

\begin{lemma}[Harper] \label{lem:Harper}
	Let $a \geq 1$. For any integer $n \geq 1$, let $G_1,\dots,G_n$ be independent real Gaussian random variables, each having mean zero and variance between $\frac{1}{20}$ and $20$, say. Let $h$ be a function such that $|h(j)| \leq 10 \log j$. Then 
	\[
	\P\Big( \sum_{m=1}^j G_m \leq a  + h(j), \forall 1 \leq j \leq n \Big) \asymp \min\Big( 1, \frac{a}{\sqrt{n}} \Big).
	\]
\end{lemma}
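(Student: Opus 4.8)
The plan is to prove the upper and lower bounds on this probability separately, following Harper's treatment of the classical ballot problem. Write $S_0 = 0$, $S_j = G_1 + \cdots + G_j$, and $V_j = \sum_{m \le j} \mathrm{Var}(G_m)$, so that $j/20 \le V_j \le 20 j$ and each $S_j$ is a centred Gaussian of variance $V_j$. Since the increments are Gaussian, $(S_j)_{j \le n}$ has the same law as $(B(V_j))_{j \le n}$ for a standard Brownian motion $B$, and it is convenient to work with this embedding. Two reductions come first. For the upper bound we may assume $a \le \sqrt n$ (the claim being trivial otherwise), and since $h(j) \le 10\log j$ the event is contained in $\{S_j \le a + 10\log j \text{ for all } 1 \le j \le n\}$. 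For the lower bound, since $h(j) \ge -10\log j$ the event contains $\{S_j \le a - 10\log j \text{ for all } 1 \le j \le n\}$; moreover, when $a \ge \sqrt n$ the desired lower bound $\gg 1$ follows directly from L\'evy's inequality $\P(\max_{j \le n} S_j \ge \lambda) \le 2\P(S_n \ge \lambda)$ applied with $\lambda = \tfrac12 \sqrt n$ together with the variance bound $V_n \le 20 n$. Thus it remains to prove $\P(S_j \le a \pm 10\log j \text{ for all }j \le n) \asymp a/\sqrt n$ in the range $1 \le a \le \sqrt n$.

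The base case is exact: by the reflection principle, $\P(\sup_{0 \le t \le T} B(t) \le c) = \P(|B(T)| \le c) \asymp \min(1, c/\sqrt T)$ for every $c \ge 0$. To get from here to the barriers $a \pm 10\log j$ and back to the sampled times $V_1, \dots, V_n$, I would split $\{1, \dots, n\}$ into dyadic blocks $(2^{k-1}, 2^k]$, $0 \le k \le \log_2 n$. After centring at the walk's position $S_{2^{k-1}}$ and rescaling by $2^{k/2}$, the barrier on the $k$-th block is displaced by only $O(k\, 2^{-k/2})$ and the conditional increments remain Gaussian with variances in a fixed ratio, so the base-case estimate applies on each block with absolute implied constants; these displacements sum to $O(1)$ over $k$, which is exactly why the $10\log j$ terms do not affect the order of magnitude. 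For the upper bound one chains the block estimates, conditioning successively on the $S_{2^{k-1}}$ and using that on the event the walk is forced down to scale $-\sqrt{2^k}$, so that only the $\asymp \log(n/a^2)$ scales between $\asymp a^2$ and $n$ are genuinely constrained, each costing a bounded factor, for a total of $\asymp (a^2/n)^{1/2}$. For the lower bound I would instead condition on the endpoint $S_n$ and invoke the Brownian-bridge estimate $\P(\sup_{[0,T]}(\text{bridge from }0\text{ to }s) \le c) = 1 - \exp(-2c(c-s)/T)$ for $c > \max(0,s)$; integrating $1 - \exp(-2a(a-s)/V_n)$ against the Gaussian law of $S_n$, with the $O(1)$-sized oscillations of $B$ between consecutive sampled times absorbed by a union bound costing at most $n\exp(-c(\log n)^2)$ and the slowly-varying part of the barrier contributing only lower-order corrections, gives $\asymp a/\sqrt n$ since $V_n \asymp n$.

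I expect the main obstacle to be the upper bound when $a$ is small, say $a = O(\log n)$. One cannot replace the time-dependent barrier $a + 10\log j$ by its endpoint value $a + 10\log n$ (this would only yield $\ll (a + \log n)/\sqrt n$, losing a factor $\log n$ when $a$ is bounded), and one cannot get away with a single intermediate conditioning either, since conditioning on $S_{n/2}$ without exploiting that survival forces $S_{n/2}$ to be a negative-excursion value of size $\asymp \sqrt n$ destroys the estimate entirely. One must genuinely use the constraint at \emph{every} dyadic scale and carry the recursion precisely enough that the accumulated constants combine into the correct power $\tfrac12$ of $a^2/n$; this is the ballot-problem heart of the lemma, and is the point at which we lean on Harper's argument. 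None of the numerical constants ($\tfrac{1}{20}$, $20$, $10$) matter for the method; they only enter the implied constants.
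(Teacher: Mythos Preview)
The paper does not prove this lemma: its entire ``proof'' is a citation to Harper's \emph{Probability Result 1} together with the remark that the hypotheses on $a$ and $n$ being large can be relaxed by adjusting constants. So there is no argument in the paper to compare yours against --- the authors explicitly treat this as the one black box they import.

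Your sketch is a reasonable outline of how such a result is actually proved (Brownian embedding, reflection for the flat-barrier base case, dyadic scales to pass between the flat barrier and $a \pm 10\log j$), and you are right that the $10\log j$ terms are harmless because they are of lower order than the natural scale $\sqrt j$. But two points in your write-up are not quite right as stated. First, in the upper-bound chaining you say that $\asymp \log(n/a^2)$ dyadic scales ``each cost a bounded factor'' and that this multiplies to $(a^2/n)^{1/2}$; a product of $\log(n/a^2)$ constant factors is $(n/a^2)^{c}$ for some $c$ depending on the constant, not automatically $c = -\tfrac12$, so something more is needed here (in practice one tracks not just survival at each scale but the joint law of survival and the \emph{value} $S_{2^k}$, and it is this two-dimensional recursion that produces the correct exponent). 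Second, for the lower bound when $a$ is bounded, the barrier $a - 10\log j$ is negative for essentially all $j$, so the Brownian-bridge formula with a fixed level $c = a$ does not directly apply, and taking $c = a - 10\log n$ gives zero; one genuinely has to handle a moving (and eventually negative) barrier, again via a scale-by-scale argument. You acknowledge this yourself in your final paragraph by saying you ``lean on Harper's argument'' at the crux, which is honest --- but then your write-up is really a citation plus commentary, much like the paper's, rather than an independent proof.
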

\begin{proof} This is \cite[Probability Result 1, p. 29]{Harper-2020a}. 
	 Harper states the result  with $a$ and $n$ being large, 
	but this may be relaxed by adjusting the implied constants suitably.  
\end{proof}

 	The term $h(j)$ in \cref{lem:Harper} is needed, for example, in the upper bound part of our argument to obtain convergence of some sums.  It should be thought of as largely harmless, since a sum of $j$ independent Gaussian variables would typically exhibit fluctuations on the scale of $\sqrt{j}$ and $h(j)$ is negligible in comparison to this natural scale.

With the one exception of \cref{lem:Harper} above, we have kept the proof of \cref{thm:main}   self-contained.  For convenience, we have split the paper into two parts, focussing first on the upper bound implicit in \cref{thm:main} (see  Sections \ref{sec:ProofUpper} to \ref{sec:BallotSavings}) and then dealing with the lower bound (see Sections \ref{sec:ProofLower} to \ref{sec:TiltedProbability}). Note that the relation $A \ll B$ means that $A \leq C B$ for some absolute positive constant $C$. \\


\begin{center}
{\bf Part I: The upper bound of \cref{thm:main}}
\end{center}

%
%
\section{Deducing the upper bound from two propositions}
\label{sec:ProofUpper}

 It is enough to prove the upper bound in the range $\tfrac 12 \le q \le 1$, since by H{\" o}lder's inequality 
 the bound would then hold for all smaller $q$ as well.  As mentioned earlier, the problem of bounding $\E[|A(N)|^{2q}]$ may be related to the problem of bounding moments of the generating function $F_K(re^{i\theta})$ as in \eqref{2.4}.  We make this link precise here, and reduce 
  the upper bound part of the main theorem to two propositions that will be established in the following sections. 

\begin{proposition} 
\label{prop3.1} For $1/2\le q \le 1$ and any integer $N \geq 1$, we have 
$$ 
\Big(\E[ |A(N)|^{2q} ]\Big)^{\frac 1{2q}} \ll \frac{1}{\sqrt{N}} \sum_{j=1}^{J} \Big( \E \Big[ \Big(\frac{1}{2\pi} \int_0^{2\pi}  |F_{N/2^j}(\exp(j/N+i\theta))|^2 d\theta\Big)^q\Big]\Big)^{\frac{1}{2q}} + \frac{1}{ N},
$$ 
where $J= \lceil 4\log \log(4 N) \rceil$.
\end{proposition}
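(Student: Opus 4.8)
The plan is to peel off the frequencies of $F_N$ in dyadic blocks and match each block against the $F_{N/2^j}$ appearing on the right. For $1 \le j \le J$ set
\[
G_j(z) = \exp\Big( \sum_{N/2^j < k \le N/2^{j-1}} \frac{X(k)}{\sqrt{k}}\, z^k \Big),
\]
so that $F_{N/2^{j-1}}(z) = F_{N/2^j}(z)\,G_j(z)$, and telescoping gives
\[
F_N(z) = F_{N/2^J}(z) + \sum_{j=1}^{J} F_{N/2^j}(z)\big(G_j(z)-1\big).
\]
Extracting the coefficient of $z^N$ yields $A(N) = [z^N]F_{N/2^J} + \sum_{j=1}^{J} T_j$ with $T_j := [z^N]\big(F_{N/2^j}(G_j-1)\big)$. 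Since we may assume $1/2 \le q \le 1$, the map $Y \mapsto \|Y\|_{2q} := (\E|Y|^{2q})^{1/(2q)}$ is a norm, so by the triangle inequality it suffices to prove $\|[z^N]F_{N/2^J}\|_{2q} \ll 1/N$ and, for each $1 \le j \le J$,
\[
\|T_j\|_{2q} \ll \frac{1}{\sqrt{N}}\Big( \E\Big[ \Big( \frac{1}{2\pi}\int_0^{2\pi} |F_{N/2^j}(e^{j/N+i\theta})|^2\, d\theta \Big)^{q} \Big] \Big)^{\frac{1}{2q}} .
\]

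First I would dispose of the error term. The quantity $\E|[z^N]F_{N/2^J}|^2 = \sum_{|\lambda|=N,\ \lambda_1 \le N/2^J}\prod_k \tfrac{1}{m_k!\,k^{m_k}}$ is, by the permutation count recalled in Section~2, exactly the probability that a uniformly random permutation of $N$ symbols has every cycle of length at most $N/2^J$; such a permutation has at least $2^J$ cycles, so this is at most $\big(\sum_{k\le N}1/k\big)^{2^J}/(2^J)!$. Because $J = \lceil 4\log\log(4N)\rceil$ makes $2^J \gg (\log N)^2$, this bound is comfortably $\ll 1/N^2$, whence $\|[z^N]F_{N/2^J}\|_{2q} \le \|[z^N]F_{N/2^J}\|_2 \ll 1/N$.

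For the main terms, write $F_{N/2^j}(z) = \sum_m c_m z^m$ and $G_j(z)-1 = \sum_l d_l z^l$, so $T_j = \sum_m c_m d_{N-m}$. Here $d_l = 0$ unless $l > N/2^j$; the families $\{c_m\}$ and $\{d_l\}$ are independent (they come from disjoint blocks of the $X(k)$); and the $d_l$ are mean zero and pairwise orthogonal, $\E[d_l\overline{d_{l'}}]=0$ for $l\neq l'$, by \eqref{eqn:PartitionOrthogonality}. Hence, conditioning on $\{c_m\}$, the sum $T_j$ is a linear form in orthogonal mean-zero variables, so $\E[\,|T_j|^2 \mid \{c_m\}\,] = \sum_m |c_m|^2\,\E|d_{N-m}|^2$, and, since $2q\le 2$, concavity of $t\mapsto t^q$ gives $\E[\,|T_j|^{2q}\mid\{c_m\}\,] \le \big(\sum_m |c_m|^2\,\E|d_{N-m}|^2\big)^{q}$. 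I would then invoke the pointwise estimate
\[
\E|d_l|^2 \ll \frac{1}{N}\, e^{2j(N-l)/N} \qquad (N/2^j < l \le N),
\]
which gives $\sum_m |c_m|^2\,\E|d_{N-m}|^2 \ll \tfrac1N \sum_m |c_m|^2 e^{2jm/N} = \tfrac1N\cdot\tfrac{1}{2\pi}\int_0^{2\pi}|F_{N/2^j}(e^{j/N+i\theta})|^2\,d\theta$ by Parseval; taking expectations and then $2q$-th roots yields the bound for $\|T_j\|_{2q}$. Summing over $1\le j\le J$ and restoring the error term completes the proof, the final $1/N$ coming from the error term.

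The substance of the argument, and the step I expect to be the main obstacle, is the pointwise estimate for $\E|d_l|^2$. Its meaning is combinatorial: $\E|d_l|^2 = \sum_{|\mu|=l,\ \mu\subseteq(N/2^j,N/2^{j-1}]}\prod_k \tfrac{1}{m_k!\,k^{m_k}}$ is the probability that a random permutation of $\{1,\dots,l\}$ has all cycle lengths in the window $(N/2^j,N/2^{j-1}]$, and this equals $\sum_{t\ge1}\tfrac{1}{t!}\sum_{(a_1,\dots,a_t)\in(N/2^j,N/2^{j-1}]^t,\ a_1+\cdots+a_t=l}\tfrac{1}{a_1\cdots a_t}$. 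Fixing one coordinate and bounding the remaining sum by a power of $\sum_{N/2^j<a\le N/2^{j-1}}1/a = \log 2 + O(2^j/N)$, the $t$-th term is $\le \tfrac{2^j}{N}\cdot\tfrac{(\log 2 + o(1))^{t-1}}{t!}$, while the window forces $t\ge \lceil 2^{j-1}l/N\rceil$. One must then check that the super-exponential decay of $(\log 2)^t/t!$ in $t$ balances, uniformly in $j$ and $l$, against the factor $e^{2j(N-l)/N}$ — which is of size $\asymp e^{2j}$ precisely when only one cycle is forced (that is, $l$ near $N/2^j$) and shrinks to $O(1)$ as $l\to N$ — and this comes down to the elementary fact that $t\mapsto t(C-\log t)$ is bounded above for each fixed $C$. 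I would note that a single-radius application of Rankin's trick to $\exp\big(\sum_{N/2^j<k\le N/2^{j-1}}w^k/k\big)$ is provably too lossy near $l=N$; one genuinely has to use that many cycles are forced there.
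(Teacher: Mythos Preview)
Your proof is correct and follows essentially the same route as the paper: the telescoping decomposition $A(N)=[z^N]F_{N/2^J}+\sum_j T_j$ is exactly the paper's decomposition $A(N)=\widetilde A_J(N)+\sum_j A_j(N)$ by largest part, your conditioning-plus-H\"older step matches \eqref{4.3}--\eqref{4.4}, and your key estimate $\E|d_l|^2\ll N^{-1}e^{2j(N-l)/N}$ is precisely the paper's bound on $\sum_{|\rho|=n}\E_j[|a(\rho)|^2]$, proved there by the same two-case analysis ($n\le N/2$ versus $n>N/2$) you outline. The only non-cosmetic differences are that you bound the tail term via the cycle-count inequality $\P[C\ge 2^J]\le H_N^{2^J}/(2^J)!$ rather than Rankin's trick, and you organize the combinatorics of $\E|d_l|^2$ via ordered tuples rather than partition counting---both harmless variants.
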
 

\begin{proposition} 
\label{prop3.2} 
Let $K\ge 1$ be a real number and let $F(z)= F_K(z)$ be as in \eqref{2.3}.   Uniformly for $1/2 \le q \le 1$ and $1\le r\le e^{1/K}$ we have 
$$ 
\E\Big[ \Big(\frac{1}{2\pi } \int_0^{2\pi} |F(re^{i\theta})|^2 d\theta \Big)^q \Big] \ll \Big( \frac{K}{(1-q)\sqrt{\log K} + 1} \Big)^q. 
$$
\end{proposition}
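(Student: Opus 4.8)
The plan is to transfer the ``multiplicative'' quantity on the left to a ballot estimate for a Gaussian random walk indexed by blocks of frequencies. Write $M:=\frac{1}{2\pi}\int_0^{2\pi}|F(re^{i\theta})|^2\,d\theta=\frac{1}{2\pi}\int_0^{2\pi}\exp\!\big(2\,\Re P(\theta)\big)\,d\theta$, where $P(\theta):=\sum_{k\le K}\frac{X(k)}{\sqrt k}r^k e^{ik\theta}$. Two ranges are free: if $K$ is bounded by an absolute constant, or if $(1-q)\sqrt{\log K}\le 1$, then the target is $\asymp K^q$ and Jensen's inequality with \cref{lem3} already gives $\E[M^q]\le(\E M)^q=\exp\!\big(q\sum_{k\le K}r^{2k}/k\big)\ll K^q$. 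So assume $K$ large and $1-q>1/\sqrt{\log K}$. First I would partition $\{1,\dots,\lfloor K\rfloor\}$ into consecutive intervals $I_1,\dots,I_L$ with each $\sum_{k\in I_\ell}r^{2k}/k\in[1,3]$; since $1\le r\le e^{1/K}$ we have $\sum_{k\le K}r^{2k}/k=\log K+O(1)$, so $L\asymp\log K$. Set $V_\ell:=\sum_{k\in I_1\cup\cdots\cup I_\ell}r^{2k}/k$ (so $V_\ell\asymp\ell$ and $e^{V_L}\asymp K$) and $S_\ell(\theta):=\Re\sum_{k\in I_1\cup\cdots\cup I_\ell}\frac{X(k)}{\sqrt k}r^k e^{ik\theta}$. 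Then $S_L(\theta)=\Re P(\theta)$, the process $\Re P$ is stationary in $\theta$, the increments $S_\ell-S_{\ell-1}$ are independent across $\ell$ (each a centred real Gaussian of variance $\tfrac12(V_\ell-V_{\ell-1})\in[\tfrac12,\tfrac32]$ at every fixed $\theta$), and $\E[e^{2S_L(\theta)}]=e^{V_L}$ by \cref{lem3}.

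Next I would decompose the circle according to the first block at which the walk $\ell\mapsto S_\ell(\theta)$ rises above a barrier. Fix a barrier $b_j=C+h(j)$, with $C$ a large absolute constant and $h$ slowly growing (of logarithmic size; the freedom to insert such an $h$, permitted by \cref{lem:Harper}, will matter below), and put for $1\le\ell\le L$
\[
\Theta_\ell:=\{\theta:\ S_j(\theta)\le V_j+b_j\ \text{for}\ j<\ell,\ S_\ell(\theta)>V_\ell+b_\ell\},\qquad \Theta_0:=\{\theta:\ S_j(\theta)\le V_j+b_j\ \forall\, j\le L\},
\]
so that $[0,2\pi)=\bigsqcup_{\ell=0}^{L}\Theta_\ell$. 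Writing $M_\ell:=\frac{1}{2\pi}\int_{\Theta_\ell}e^{2S_L(\theta)}\,d\theta$, subadditivity of $t\mapsto t^q$ for $q\le1$ gives $\E[M^q]\le\sum_{\ell=0}^{L}\E[M_\ell^q]$, and it remains to bound each piece.

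The term $\ell=0$ is the main term, and is where \cref{lem:Harper} enters. By Jensen, $\E[M_0^q]\le(\E M_0)^q=\big(\tfrac{1}{2\pi}\int_0^{2\pi}\E[e^{2S_L(\theta)}\mathbf{1}_{\theta\in\Theta_0}]\,d\theta\big)^q$. For each fixed $\theta$, a Gaussian change of measure (tilting by $e^{2S_L(\theta)}/\E[e^{2S_L(\theta)}]$, which shifts the mean of each increment $S_\ell-S_{\ell-1}$ by $V_\ell-V_{\ell-1}$) turns $\E[e^{2S_L(\theta)}\mathbf{1}_{\theta\in\Theta_0}]$ into $e^{V_L}$ times the probability that a centred Gaussian walk with increment variances in $[\tfrac12,\tfrac32]$ stays $\le b_j$ for all $1\le j\le L$; \cref{lem:Harper} (with $a=C$ and $h$ as chosen) bounds this by $\ll 1/\sqrt L\asymp 1/\sqrt{\log K}$. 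Hence $\E[M_0^q]\ll(K/\sqrt{\log K})^q$, and since $q\ge\tfrac12\ge 1/\sqrt{\log K}$ for $K$ large, this is $\le\big(K/((1-q)\sqrt{\log K}+1)\big)^q$, as required.

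The hard part is the terms $\ell\ge1$, which carry most of $\E M$ and where the naive bound $\E[M_\ell^q]\le(\E M_\ell)^q$ is too weak; here one must also use that on $\Theta_\ell$ the tail field $S_L-S_\ell$ (built from $I_{\ell+1},\dots,I_L$) is independent of $\Theta_\ell$ and is itself an instance of the same problem over $L-\ell$ blocks, hence itself spread out. I would further split $\Theta_\ell=\bigsqcup_{m\ge0}\Theta_{\ell,m}$ by the size of the overshoot, $\{V_\ell+b_\ell+m<S_\ell(\theta)\le V_\ell+b_\ell+m+1\}$; on $\Theta_{\ell,m}$ one has $e^{2S_\ell(\theta)}\le e^{2(V_\ell+b_\ell+m+1)}$, so conditioning on $I_1,\dots,I_\ell$ (which fixes $\Theta_{\ell,m}$), using independence of the tail, and invoking \cref{prop3.2} for the tail — legitimate as an induction on the number of blocks, the base case $L=O(1)$ being the trivial bound above — one obtains a bound for $\E[M_{\ell,m}^q]$ in terms of $e^{2q(V_\ell+b_\ell+m)}$, of $\E|\Theta_{\ell,m}|=2\pi\,\P(0\in\Theta_{\ell,m})$ (by stationarity), and of the inductive factor $\big(e^{V_L-V_\ell}/((1-q)\sqrt{L-\ell}+1)\big)^q$ for the tail. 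One then estimates $\P(0\in\Theta_{\ell,m})$ by a ballot-type analysis of the untilted mean-zero walk $S_j(0)$, which must stay below the essentially linear barrier $V_j+b_j$ for $j<\ell$ and then land near height $V_\ell+b_\ell+m$: this forces a factor $e^{-V_\ell-2b_\ell-2m}$, a ballot factor polynomial in $\ell$, and Gaussian decay in $m$ (from the single large increment that produces the overshoot). When everything is substituted, the powers of $e^{V_\ell}$ cancel against the tail factor, the $m$-sum converges, and one is left with a bound of the shape $\sum_{\ell}\E[M_\ell^q]\ll K^q\sum_{\ell}\ell^{-A}\big((1-q)\sqrt{L-\ell}+1\big)^{-q}$, which (splitting at $\ell=L/2$) collapses to $\ll\big(K/((1-q)\sqrt{\log K}+1)\big)^q$ provided the effective exponent $A$ exceeds $1$ uniformly. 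Arranging this — choosing $h$ and carrying out the ballot estimates precisely enough to make the $\ell$-sum converge with a $q$-uniform constant, so that the induction closes — is the main obstacle, and is exactly the content of the ``ballot savings'' portion of the paper; the role of the term $h(j)$ in \cref{lem:Harper} is precisely to supply the convergence of this sum.
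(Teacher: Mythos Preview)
Your treatment of the ``good'' set $\Theta_0$ is correct and is essentially the paper's \cref{prop5.3}: the exponential tilt converts $\E[e^{2S_L}\mathbf{1}_{\Theta_0}]$ into $e^{V_L}$ times a ballot probability, and \cref{lem:Harper} gives the factor $1/\sqrt{L}$.

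The gap is in your handling of $\Theta_\ell$ for $\ell\ge 1$.  You claim a bound for $\E[M_{\ell,m}^q]$ that simultaneously contains the factor $\E|\Theta_{\ell,m}|=2\pi\,\P(0\in\Theta_{\ell,m})$ \emph{and} the inductive tail factor $\big(e^{V_L-V_\ell}/((1-q)\sqrt{L-\ell}+1)\big)^q$.  Standard H\"older/Jensen arguments give one or the other, not both.  If you condition on the first $\ell$ blocks and bound $\int_{\Theta_{\ell,m}}\le\int_0^{2\pi}$ to invoke the inductive hypothesis on the tail, the first $\ell$ blocks contribute only $\P(\Theta_{\ell,m}\neq\emptyset)$, which after a union bound over $\theta$ picks up a factor $\asymp e^{V_\ell}$ that exactly cancels the savings.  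If instead you use Jensen on the whole expression, $\E[M_{\ell,m}^q]\le(\E M_{\ell,m})^q$, a tilt gives $\E M_{\ell,m}\ll K\,\ell^{-1/2}e^{-cm^2}$, whence $\sum_m\E[M_{\ell,m}^q]\ll K^q\ell^{-q/2}$; but $\sum_\ell \ell^{-q/2}$ diverges for $q\le 1$, so the induction does not close.  Your closing remark that ``this is exactly the content of the `ballot savings' portion of the paper'' is a misreading: Section~\ref{sec:BallotSavings} is the tilt-plus-ballot computation underlying your $\Theta_0$ term, not a device for the $\ell\ge 1$ terms.

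The paper avoids this difficulty by decomposing the \emph{probability space} rather than the circle.  For each $A\ge 1$ it defines the event $\cG_r(A;K)$ that the barrier at height $A+10\log n$ holds \emph{for all} $\theta$ simultaneously (Definition~\ref{def5.1}).  Proposition~\ref{prop5.2} shows $\P[\cG_r(A;K)^c]\ll e^{-A}$ via a discretisation of $\theta$ and a union bound; Proposition~\ref{prop5.3} shows $\E[\mathbf{1}_{\cG_r(A;K)}M]\ll AK/\sqrt{\log K}$ via the tilt-plus-ballot argument.  One then partitions $\Omega$ into $\cG_r(1;K)$, the shells $\cG_r(2^j;K)\setminus\cG_r(2^{j-1};K)$, and a tail, and applies H\"older in the form
\[
\E\big[\mathbf{1}_{\cG(2^j)\setminus\cG(2^{j-1})}\,M^q\big]\ \le\ \P[\cG(2^{j-1})^c]^{1-q}\,\big(\E[\mathbf{1}_{\cG(2^j)}M]\big)^{q}\ \ll\ e^{-(1-q)2^{j-1}}\Big(\frac{2^jK}{\sqrt{\log K}}\Big)^{q}.
\]
Summing over $j$ produces precisely the factor $\big((1-q)\sqrt{\log K}+1\big)^{-q}$, with no induction.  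The point you are missing is that the $q$-dependence in the answer comes from raising the \emph{rarity} of the bad event to the power $1-q$, not from an inductive tail bound.
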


Applying the estimate of \cref{prop3.2} in \cref{prop3.1} it follows that for $1/2 \le q\le 1$ 
$$ 
\Big(\E[ |A(N)|^{2q} ]\Big)^{\frac 1{2q}} \ll \frac{1}{\sqrt{N}} \sum_{j=1}^{J} \Big( \frac{N/2^j}{(1-q) \sqrt{\log N} +1 } \Big)^{\frac 12} + \frac{1}{N} 
\ll \Big( \frac{1}{(1-q)\sqrt{\log N}+1}\Big)^{\frac 12}.
$$
This establishes the upper bound in \cref{thm:main} in the range $1/2\le q\le 1$.

%
%
\section{Proof of \cref{prop3.1}}
\label{sec:DecomposeUpper}

Our starting point is the representation of $A(N)$ as a sum over partitions $\lambda$ of $N$, recall \eqref{2.1} and \eqref{2.2}.  
Group these partitions according to the size of their largest part $\lambda_1$.  
For $1\le j \le J$ write 
$$ 
A_j(N) = \sum_{\substack{ |\lambda| = N \\ N/2^{j} <\lambda_1 \le N/2^{j-1}} }a(\lambda), 
$$
and put 
$$ 
\widetilde{A}_J(N) =  \sum_{\substack{|\lambda|= N \\ \lambda_1 \le N/2^J}} a(\lambda), 
$$
so that we have the natural decomposition
$$ 
A(N) = \sum_{j=1}^{J} A_j(N) + {\widetilde A}_J(n).  
$$ 
Minkowski's inequality gives, for $1/2 \le q\le 1$,  
\begin{equation} 
\label{4.1}
\Big( \E[ |A(N)|^{2q}]\Big)^{\frac{1}{2q}} \le \sum_{j=1}^{J} \Big( \E [ |A_j(N)|^{2q} ]\Big)^{\frac 1{2q}} + \Big(\E [ |\widetilde{A}_J(N)|^{2q}] \Big)^{\frac{1}{2q}} .
\end{equation}

We begin by estimating the last term in the right side of \eqref{4.1}, showing that it is $\ll 1/N$.  By H{\" o}lder's inequality, we find that 
\begin{equation}
\label{4.2} 
\Big( \E [ |\widetilde{A}_J(N)|^{2q}]\Big)^{\frac{1}{q}} \le \E [ |\widetilde{A}_J(N)|^2 ] =\sum_{\substack{ |\lambda|= N \\ \lambda_1 \le N/2^J}} 
\prod_{k} \frac{1}{m_k! k^{m_k}}. 
\end{equation}  
The right side of \eqref{4.2} is the proportion of elements in the symmetric group $S_N$ whose cycle decomposition has largest 
cycle $\le N/2^J$ in length, and it is a familiar fact that such permutations are rare (corresponding to the rarity of integers all of whose prime factors are small).    
We may supply a quick bound (corresponding to Rankin's trick with Dirichlet series) on this quantity  as follows.  The right side of \eqref{4.2} is the coefficient of $z^N$ in the generating function $\exp( \sum_{k\le N/2^J} z^k/k)$.  
Since the coefficients of this generating function are all non-negative, for any $r>0$ we conclude that the right side of \eqref{4.2} is 
$$
\le r^{-N} \exp\Big( \sum_{k\le N/2^J} \frac{r^k}{k} \Big) \ll \exp(-2^J) \frac{N}{2^J} \ll \frac{1}{N^2}, 
$$ 
upon choosing $r= \exp(2^J/N)$.  Inserting this into \eqref{4.2} we conclude that the last term in \eqref{4.1} is $\ll 1/N$.

We now focus on bounding the contribution of the $j$-th term of the sum in \eqref{4.1}.   Recall that the  term $A_j(N)$ sums over partitions 
$\lambda$ of $N$ with largest part $\lambda_1$ lying between $N/2^j$ and $N/2^{j-1}$.   Decompose the partition $\lambda$ into $\rho$ and $\sigma$, where $\rho$ consists of those non-zero parts in $\lambda$ that lie between $N/2^j$ and $N/2^{j-1}$, and $\sigma$ consists of those non-zero parts of $\lambda$ that are $\le N/2^j$; note that $\rho$ must have at least one non-zero term. It follows from \eqref{2.1} that $a(\lambda) = a(\rho)a(\sigma)$.   Thus, with the above understanding,  
$$ 
A_j(N) = \sum_{\substack{ \rho, \sigma \\ |\rho|+ |\sigma|=N \\ |\rho| >0}}  a(\rho) a(\sigma). 
$$ 
Observe that $a(\sigma)$ depends only on the random variables $X(k)$ for $k\le N/2^j$, while $a(\rho)$ depends only on the 
random variables $X(k)$ with $N/2^j < k \le N/2^{j-1}$.  

We shall bound the expected value of $|A_j(N)|$ by first conditioning on the variables $X(k)$ for $k\le N/2^j$ (so that $a(\sigma)$ is 
fixed in the notation above), and then bounding the expectation over these small variables $X(k)$.  Let ${\Bbb E}_j$ denote the 
conditional expectation when the variables $X(k)$ for $k\le N/2^j$ are fixed.  We shall show that 
\begin{equation} 
\label{4.3} 
\E_j \Big[ |A_j(N)|^{2q}\Big] \ll \Big( \frac{1}{2\pi N} \int_0^{2\pi} |F_{N/2^j} (\exp(j/N+i\theta))|^2 d\theta\Big)^q,
\end{equation} 
so that, upon now taking the expectation over the variables $X(k)$ with $k\le N/2^j$, we may conclude that
$$ 
\E \Big[ |A_j(N)|^{2q}\Big] \ll \E \Big[ \Big( \frac{1}{2\pi N} \int_0^{2\pi} |F_{N/2^j} (\exp(j/N+i\theta))|^2 d\theta\Big)^q\Big].
$$
This would complete the proof of our proposition. 

It remains now to establish \eqref{4.3}.  By H{\" o}lder's inequality, 
$$ 
\E_j \Big[ |A_j(N)|^{2q}\Big] \le \E_j\Big[ |A_j(N)|^2 \Big]^q,
$$ 
so that \eqref{4.3} follows from the estimate 
\begin{equation} 
\label{4.4} 
\E_j\Big[ |A_j(N)|^2 \Big] \ll \Big( \frac{1}{2\pi N} \int_0^{2\pi} |F_{N/2^j} (\exp(j/N+i\theta))|^2 d\theta\Big). 
\end{equation}

Expanding out the expression for $A_j(N)$ in terms of $a(\rho)$ and $a(\sigma)$, we find 
$$ 
\E_j \Big[ |A_j(N)|^2 \Big] = \sum_{\substack{ \rho_1, \sigma_1 \\ |\rho_1|+|\sigma_1| =N \\ |\rho_1| > 0}} 
\sum_{\substack{\rho_2, \sigma_2 \\ |\rho_2|  + |\sigma_2| =N \\ |\rho_2| >0}} a(\sigma_1) \overline{a(\sigma_2)}  \E_j \Big[ a(\rho_1) \overline{a(\rho_2)}\Big].
$$ 
Now  note  \eqref{eqn:PartitionOrthogonality} implies that $\E_j[ a(\rho_1) \overline{a(\rho_2)}] = 0$ unless $\rho_1 = \rho_2$.  Thus, writing $n= |\rho_1| = |\rho_2|$ 
with $N/2^j < n \le N$ (and so $|\sigma_1| = |\sigma_2| = N-n$), we obtain 
\begin{equation} 
\label{4.5}
\E_j \Big[ |A_j(N)|^2 \Big] = \sum_{N/2^j < n \le N} \Big| \sum_{ |\sigma| =N-n} a(\sigma)\Big|^2 \sum_{|\rho|=n} \E_j \Big[ |a(\rho)|^2\Big].
\end{equation} 
 
 To proceed further, we must estimate the sum over the partitions $\rho$ above.  Recall that all parts of $\rho$ must 
 be between $N/2^j$ and $N/2^{j-1}$, and denote (as before) by $m_k$ the number of parts of size $k$ in $\rho$ (so $N/2^j < k \le N/2^{j-1}$). 
 Then by \eqref{eqn:PartitionSquare}, we have that
 $$
\E_j \Big[ |a(\rho)|^2 \Big] = \prod_{N/2^j < k \le N/2^{j-1}} \frac{1}{m_k! k^{m_k}} \le   \Big( \frac{2^j}{N}\Big)^r,
$$ 
where $r$ denotes the number of parts in $\rho$ (so that $2^{j-1}n/N \le r < 2^j n/N$).  Thus
$$
 \sum_{|\rho|=n} \E_j \Big[ |a(\rho)|^2\Big] \le \sum_{2^{j-1}n/N \le r < 2^j n/N} \#\{ \rho: \ \ |\rho|=n, \rho \text{ has } r \text{ parts} \} \Big(\frac{2^j}{N}\Big)^r.
 $$
 The number of partitions $\rho$ of $n$ with $r$ parts (all between $N/2^j$ and $N/2^{j-1}$) 
 is at most $\binom{[N/2^{j}] +r}{r-1}$, since $r-1$ parts may be freely chosen among the integers in $(N/2^j, N/2^{j-1}]$ and the final part 
 has at most one possibility.  Thus, using $r\le 2^j \le N/2^j$ for large $N$, 
 \begin{align*}
\sum_{|\rho|=n} \E_j\Big[ |a(\rho)|^2\Big] &\le \sum_{2^{j-1}n/N \le r <2^{j}n/N} \Big(\frac{2^j}{N}\Big)^r \frac{(N/2^j+r)^{r-1}}{(r-1)!} \le \frac{2^j}{N} \sum_{2^{j-1}n/N \leq r < 2^jn/N} \frac{2^{r-1}}{(r-1)!}. 
\end{align*}
If $n\le N/2$ then the sum over $r$ above is $\ll 1$, while if $n> N/2$ we may bound the sum over $r$ above by $\ll 2^{-j}$.   Thus in both cases we may conclude that 
\begin{align*}
\sum_{|\rho|=n} \E_j\Big[ |a(\rho)|^2\Big] &\ll \frac 1N \exp\Big( 2j \frac{N-n}{N}\Big). 
\end{align*} 

Inserting the above in \eqref{4.5} it follows that 
$$ 
\E_j \Big[ |A_j(N)|^2 \Big] \ll   \frac1N  \sum_{N/2^j < n\le N} \Big| \sum_{|\sigma|= N-n} a(\sigma)\Big|^2 \exp\Big( 2j\frac{N-n}{N}\Big).
$$
Now 
$$ 
F_{N/2^j}(z) = \sum_{r} \Big( \sum_{|\sigma|=r} a(\sigma)\Big) z^r, 
$$ 
and so by Parseval 
$$
 \frac1N  \sum_{N/2^j < n\le N} \Big| \sum_{|\sigma|= N-n} a(\sigma)\Big|^2 \exp\Big( 2j\frac{N-n}{N}\Big) 
 \le  \frac{1}{2\pi N} \int_0^{2\pi} |F_{N/2^j}(\exp(j/N+i\theta))|^2 d\theta.
$$ 
This establishes \eqref{4.4} and completes the proof of the proposition.  
     \hfill \qed

%
%
\section{Plan for the proof of \cref{prop3.2}}

The  proof  of the upper bound portion of  \cref{thm:main} has  now been reduced to establishing \cref{prop3.2}.  By \cref{lem3} we conclude that for $1 \leq r \leq e^{1/K}$,
\begin{equation} 
\label{5.1} 
\E \Big[ \frac{1}{2\pi} \int_0^{2\pi } |F(re^{i\theta})|^2 d\theta \Big] = \exp\Big( \sum_{k\le K} \frac{r^{2k}}{k} \Big) \asymp K.
\end{equation} 
By H{\" o}lder's inequality it follows that for $0< q\le 1$ and $1 \leq r \le e^{1/K}$, 
 \begin{equation*} 
 \E \Big[ \Big(\frac{1}{2\pi} \int_0^{2\pi } |F(re^{i\theta})|^2 d\theta \Big)^q \Big]  \ll K^q,  
 \end{equation*} 
 so that in \cref{prop3.2} we are looking for a small improvement over this easy upper bound.   
 As mentioned earlier, the source of this improvement is connected to the ballot problem in probability.

 \begin{definition}  \label{def5.1} Let $K \geq 3$. Suppose $1 \leq r\le e^{1/K}$ and  $1\le A \le \sqrt{\log K}$.  Define 
 $\cG_r(A,\theta;K)$  to be the following event:  for each $1\le n \le \log K$, one has
\[
 \sum_{k< e^n} \Big( \mathrm{Re} \frac{X(k) r^k e^{ik\theta}}{\sqrt{k}}  - \frac{r^{2k}}{k} \Big) \le A+ 10 \log n. 
\]
 Further, define $\cG_r(A;K)$ to be the event where $\cG_r(A,\theta;K)$ holds for all $\theta \in [0,2\pi)$.   
 \end{definition}

We shall deduce \cref{prop3.2} from the following two propositions concerning $\cG_r(A;K)$.

\begin{proposition} \label{prop5.2}  For all $1 \leq r \leq e^{1/K}$ and all $1\le A \le \sqrt{\log K}$ we have 
$$ 
\P[ \cG_r(A;K) \, \mathrm{ fails}] \ll \exp(-A). 
$$ 
\end{proposition}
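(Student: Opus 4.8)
The plan is to reduce the statement to a union bound, over the integers $n$ with $1\le n\le \log K$ and over a suitable finite net of angles, of Gaussian large‑deviation estimates. Write
\[
U_n(\theta)=\sum_{1\le k<e^n}\mathrm{Re}\big(X(k)r^ke^{ik\theta}/\sqrt k\big),\qquad V_n=\sum_{1\le k<e^n}\frac{r^{2k}}{k},
\]
so that the quantity appearing in \cref{def5.1} is exactly $U_n(\theta)-V_n$, and $\cG_r(A;K)$ fails precisely when $\sup_\theta U_n(\theta)>M_n:=V_n+A+10\log n$ for some admissible $n$; hence $\P[\cG_r(A;K)\text{ fails}]\le \sum_n \P[\sup_\theta U_n(\theta)>M_n]$. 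For such $n$ every $k<e^n$ satisfies $k<K$, so $1\le r^{2k}\le e^2$, which gives the two facts we will use repeatedly: $V_n>n$ (from $r\ge1$ and $\sum_{k<e^n}1/k>n$) and $V_n\ll n$. Exactly as in the proof of \cref{lem3}, for each fixed $\theta$ the variable $U_n(\theta)$ is real Gaussian with mean $0$ and variance $V_n/2$, by rotational symmetry of the $X(k)$.

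\textbf{The pointwise estimate.} For a single $\theta$, $\P[U_n(\theta)>M_n]\le \exp(-M_n^2/V_n)$, and expanding $M_n^2/V_n=(V_n+A+10\log n)^2/V_n\ge V_n+2A+20\log n$ gives $\P[U_n(\theta)>M_n]\le e^{-V_n}e^{-2A}n^{-20}\le e^{-n}e^{-2A}n^{-20}$. Summing this over $n$ and over a net of $O(e^n)$ points — the natural resolution of a trigonometric polynomial of degree $<e^n$ — would already give a bound of the right shape $\asymp e^{-2A}$. The delicate point, and the main obstacle, is that the entropy $\asymp e^n$ of such a net exactly cancels the Gaussian cost $e^{-V_n}\le e^{-n}$, so the union bound over $\theta$ sits at a critical scale: we cannot afford to lose \emph{any} constant factor in the exponent when discretising.

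\textbf{Passing to all $\theta$ by Bernstein.} Since $U_n$ is a trigonometric polynomial of degree $<e^n$, Bernstein's inequality gives $\|U_n'\|_\infty\le e^n\|U_n\|_\infty$. Put $R_n:=\lceil 10(V_n+(A+10\log n)^2)\rceil$ and let $\Theta_n$ be $N_n:=\lceil e^n R_n\rceil$ equally spaced angles; every $\theta$ lies within $\pi/N_n$ of some $\theta'\in\Theta_n$, so $|U_n(\theta)-U_n(\theta')|\le (\pi/R_n)\|U_n\|_\infty$, and as $R_n\ge 10>2\pi$ this yields $\|U_n\|_\infty\le (1-\pi/R_n)^{-1}\max_{\theta'\in\Theta_n}|U_n(\theta')|$. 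Therefore
\[
\P\big[\sup_\theta U_n(\theta)>M_n\big]\le 2N_n\exp\Big(-\frac{(1-\pi/R_n)^2M_n^2}{V_n}\Big).
\]
The choice of $R_n$ forces $\tfrac{2\pi}{R_n}\cdot\tfrac{M_n^2}{V_n}\le 2$ (since $M_n^2/V_n\le 3(V_n+(A+10\log n)^2)\le \tfrac{3}{10}R_n$, using $V_n>1$), so the exponent is at least $M_n^2/V_n-2\ge V_n+2A+20\log n-2$. Combining this with $N_n\ll e^nR_n$, $e^ne^{-V_n}\le 1$ and $R_n\ll n+A^2$, we get $\P[\sup_\theta U_n(\theta)>M_n]\ll (n+A^2)e^{-2A}n^{-20}$. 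Summing over all integers $n\ge1$ and using $\sum_n(n+A^2)n^{-20}\ll 1+A^2$ gives $\P[\cG_r(A;K)\text{ fails}]\ll A^2e^{-2A}\ll e^{-A}$.

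\textbf{Where the difficulty lies.} As noted above, the only real obstacle is the critical nature of the union over $\theta$: a net at resolution $e^{-n}$ has $\asymp e^n$ points, matching the Gaussian cost $e^{-V_n}\le e^{-n}$, so a crude discretisation losing a multiplicative constant in the exponent would produce a bound growing like $e^{\Omega(n)}$. Two ingredients defeat this: (i) the \emph{strict} inequality $V_n>n$, which cancels the factor $e^n$ without slack to spare, and (ii) refining the net by the mild factor $R_n\asymp n+A^2$, which makes the Bernstein discretisation error cost only a bounded factor $e^{O(1)}$ rather than $e^{\Omega(n)}$. The slack term $10\log n$ built into \cref{def5.1} is precisely what supplies the decay $n^{-20}$ that renders the sum over $n$ (and over the net) convergent.
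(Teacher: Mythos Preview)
Your proof is correct. The overall architecture matches the paper's: a union bound over $n$ followed by a discretisation of $\theta$ at resolution $\asymp e^{-n}$, with the Gaussian tail doing the work and the $10\log n$ slack ensuring summability. The one genuine difference is in how you pass from the net to all $\theta$. The paper splits the barrier in half, bounds the probability that some grid point exceeds half the barrier, and separately bounds the probability that the oscillation $\int_{\theta_j}^{\theta_{j+1}}|U_n'|$ exceeds the other half via high moments of the derivative (the argument around \eqref{6.3} and the choice of $\ell$). You instead invoke Bernstein's inequality to bound $\|U_n\|_\infty$ deterministically by the maximum over a slightly refined net of size $\asymp e^n(n+A^2)$, paying only a $(1-\pi/R_n)$ factor in the threshold; your choice of $R_n$ is exactly what keeps this loss $O(1)$ in the exponent. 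Your route is a little slicker and avoids the moment calculation, at the cost of importing Bernstein; the paper's route is more self-contained. Both exploit the same essential fact you flag, namely that $V_n>n$ exactly cancels the entropy $e^n$ of the net, with the $n^{-20}$ (resp.\ $n^{-10}$) from the $\log n$ slack providing the convergence.
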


 Thus the event $\cG_r(A;K)$ is very likely for large $A$.  The crucial point is that on this 
 large set, we can improve upon the upper bound \eqref{5.1}  for the mean square of $F(re^{i\theta})$.
 
 \begin{proposition} \label{prop5.3}  With notations as above,  we have 
 $$ 
 \E \Big[ \ind_{\cG_r(A,\theta;K)} |F(re^{i\theta})|^2 \Big] \asymp \frac{A}{\sqrt{\log K}} K, 
 $$ 
 and therefore 
 $$ 
 \E \Big[ \ind_{\cG_r(A;K)} \int_0^{2\pi} |F(re^{i\theta})|^2 d\theta \Big] \ll \frac{A}{\sqrt{\log K}} K. 
 $$ 
 \end{proposition}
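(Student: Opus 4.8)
The plan is to prove the pointwise-in-$\theta$ estimate by a Gaussian change of measure that turns the weighted second moment into a ballot probability governed by \cref{lem:Harper}, and then to deduce the integrated bound by comparing indicators. Fix $\theta\in[0,2\pi)$ and $r\in[1,e^{1/K}]$, and write $Y_k:=\Re\bigl(X(k)r^ke^{ik\theta}/\sqrt{k}\bigr)$ for $k\le K$. By rotational invariance of the complex Gaussian these are independent real Gaussians with mean $0$ and variance $r^{2k}/(2k)$, and both the weight $|F(re^{i\theta})|^2=\exp\bigl(2\sum_{k\le K}Y_k\bigr)$ and the event $\cG_r(A,\theta;K)$ are measurable with respect to $(Y_k)_{k\le K}$.

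The key step is a tilt. For $Z\sim N(0,\sigma^2)$ one has $\E[e^{2Z}g(Z)]=e^{2\sigma^2}\E[g(Z+2\sigma^2)]$, so reweighting the joint law of $(Y_k)$ by $\prod_{k\le K}e^{2Y_k}$ produces the constant $\exp\bigl(\sum_{k\le K}r^{2k}/k\bigr)$ and shifts the mean of each $Y_k$ to $r^{2k}/k$ without changing its variance. Since that constant is $\asymp K$ by \eqref{5.1}, we obtain
\[
\E\bigl[\ind_{\cG_r(A,\theta;K)}|F(re^{i\theta})|^2\bigr]\asymp K\cdot\widetilde{\P}\bigl[\cG_r(A,\theta;K)\bigr],
\]
where under $\widetilde{\P}$ the variables $\widetilde Y_k:=Y_k-r^{2k}/k$ are independent mean-zero Gaussians of variance $r^{2k}/(2k)$, and $\cG_r(A,\theta;K)$ is now the event that $\sum_{k<e^n}\widetilde Y_k\le A+10\log n$ for every integer $1\le n\le\log K$.

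Next I would group the increments into the exponential blocks $B_m:=\sum_{e^{m-1}\le k<e^m}\widetilde Y_k$, so that $\sum_{k<e^n}\widetilde Y_k=\sum_{m=1}^n B_m$ exactly and the $B_m$ are independent mean-zero real Gaussians whose variances, using $1\le r^{2k}\le e^2$ for $k\le K$ and $\sum_{e^{m-1}\le k<e^m}1/k\asymp 1$, all lie between $\tfrac1{20}$ and $20$. With $L:=\lfloor\log K\rfloor$ (so $L\ge1$ since $K\ge3$, and $L\asymp\log K$), \cref{lem:Harper} applied with $G_m=B_m$, $n=L$, $a=A$ and $h(m)=10\log m$ gives $\widetilde{\P}[\cG_r(A,\theta;K)]\asymp\min(1,A/\sqrt L)\asymp A/\sqrt{\log K}$, using $1\le A\le\sqrt{\log K}$. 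Combined with the previous display this is the first assertion. For the second, since $\cG_r(A;K)=\bigcap_{\theta}\cG_r(A,\theta;K)$ we have $\ind_{\cG_r(A;K)}\le\ind_{\cG_r(A,\theta;K)}$ pointwise, so by Tonelli
\[
\E\Bigl[\ind_{\cG_r(A;K)}\int_0^{2\pi}|F(re^{i\theta})|^2\,d\theta\Bigr]=\int_0^{2\pi}\E\bigl[\ind_{\cG_r(A;K)}|F(re^{i\theta})|^2\bigr]\,d\theta\le\int_0^{2\pi}\E\bigl[\ind_{\cG_r(A,\theta;K)}|F(re^{i\theta})|^2\bigr]\,d\theta\ll\frac{A}{\sqrt{\log K}}K.
\]

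The conceptual heart of the argument is recognizing that the change of measure converts the weighted expectation into exactly the ballot-type probability that \cref{lem:Harper} is designed to control; the only genuinely technical checkpoint is the uniform two-sided bound on the block variances (the minor edge-case comparison of $\log K$ with $\lfloor\log K\rfloor$ is where the hypothesis $K\ge3$ enters), and everything else is bookkeeping in the tilting identity.
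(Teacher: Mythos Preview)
Your proof is correct and is essentially the same argument as the paper's: the Gaussian tilt you describe is exactly the ``completing the square'' substitution $y_k=x_k-r^k/\sqrt{k}$ carried out in Section~\ref{sec:BallotSavings}, after which both proofs group into exponential blocks and invoke \cref{lem:Harper}. The only cosmetic difference is that the paper first reduces to $\theta=0$ by rotational invariance and works with $x_k=\Re X(k)$, whereas you keep $\theta$ arbitrary and phrase the shift in change-of-measure language; these are equivalent.
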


 \begin{proof}[Deducing \cref{prop3.2} from \cref{prop5.2,prop5.3}] Assume without loss that $K \geq e^4$. Partition the 
 whole probability space into the events $\cG_r(1;K)$, $\cG_r(2^j;K)\backslash \cG_r(2^{j-1};K)$ for $1\le j\le J:= \lfloor (\log \log K)/(2\log 2) \rfloor$, and 
 $\cG_r(2^J;K)^c$.  \cref{prop5.3} and H{\" o}lder's inequality give 
$$ 
\E \Big[ \ind_{\cG_r(1;K)} \Big(\int_0^{2\pi} |F(re^{i\theta})|^2 d\theta \Big)^q \Big] \le \Big( \E \Big[ \ind_{\cG_r(1;K)}\int_0^{2\pi} |F(re^{i\theta})|^2 d\theta 
\Big] \Big)^q \ll \Big( \frac{K}{\sqrt{\log K}}\Big)^q. 
$$ 
For $1\le j\le J$, the event $\cG_r(2^j;K)\backslash \cG_r(2^{j-1};K)$ means that $\cG_r(2^j;K)$ holds but $\cG_r(2^{j-1};K)$ fails.  Thus 
H{\" o}lder's inequality gives 
\begin{align*} 
\E \Big[ \ind_{\cG_r(2^j;K)\backslash\cG_r(2^{j-1};K)} &\Big(\int_0^{2\pi} |F(re^{i\theta})|^2 d\theta \Big)^q \Big]  \\
&\le \Big( \P\Big[ \cG_r(2^{j-1};K) \text{ fails}\Big] \Big)^{1-q} \Big( \E \Big[ \ind_{\cG_r(2^j;K)} \int_0^{2\pi} |F(re^{i\theta})|^2 d\theta \Big]\Big)^q. 
\end{align*} 
 Using \cref{prop5.2,prop5.3}, we see that this is 
 $$ 
 \ll \exp( - (1-q)2^{j-1}) \Big( \frac{2^j K}{\sqrt{\log K}}\Big)^q \le \Big( \frac{K}{\sqrt{\log K}}\Big)^q \Big(2^j \exp(-(1-q)2^{j-1})\Big). 
 $$
 Similarly we find that 
 $$ 
 \E \Big[ \ind_{\cG_r(2^{J};K)^c} \Big(\int_0^{2\pi} |F(re^{i\theta})|^2 d\theta \Big)^q \Big] \ll K^q \exp\Big( -(1-q) 2^J\Big).
 $$ 
 Adding up these estimates, we deduce \cref{prop3.2}. 
 \end{proof}

%
%
\section{Proof of \cref{prop5.2}}  

\noindent We prove that the event $\cG_r(A;K)$ is very likely by showing that $\sum_{k< e^n} \text{Re}  X(k)r^ke^{ik\theta}/ \sqrt{k}$ is unlikely to exceed the stated barrier for any single $1 \leq n \leq \log K$ and any single $\theta \in [0,2\pi]$. 
From \cref{def5.1}, the union bound gives
\begin{align} 
\label{6.1} 
\P[ \cG_r(A;K) \text{ fails}] 
&\le \sum_{n\le \log K} \P\Big[ \max_{\theta} \sum_{k< e^n} \text{Re} \frac{X(k)r^ke^{ik\theta}}{\sqrt{k}} \ge A+10\log n +\sum_{k< e^n} 
\frac{r^{2k}}{k} \Big] \nonumber\\
&=: \sum_{n\le \log K} {\mathcal P}_n. 
\end{align}
 
 To estimate ${\mathcal P}_n$ we discretize the possible values of $\theta \in [0,2\pi)$, setting $\theta_j = 2\pi j/\lceil ne^n\rceil$ 
 for $0\le j < \lceil ne^n\rceil$.   If the maximum over $\theta$ in the definition of $\mathcal P_n$ satisfies the corresponding inequality, then we must have either 
 \begin{equation} 
 \label{6.2} 
 \sum_{k< e^n} \text{Re} \frac{X(k)r^ke^{ik\theta_j}}{\sqrt{k}} \ge  \frac A2 +5\log n +\sum_{k< e^n} 
\frac{r^{2k}}{k}, \qquad \text{ for some } 0\le j < \lceil ne^n\rceil, 
\end{equation} 
or for some $0 \leq j < \lceil n e^n \rceil$, and some $\theta_j < \theta\le \theta_{j+1}$ we must have 
$$ 
\text{Re} \int_{\theta_j}^\theta \sum_{k< e^n} X(k) r^k (i\sqrt{k} e^{iky}) dy =  \text{Re} \sum_{k< e^n} \frac{X(k)r^k}{\sqrt{k}} (e^{i\theta k}-e^{i\theta_j k})  
\ge \frac A2 + 5\log n. 
$$ 
This second case only happens if 
\begin{equation} 
\label{6.3} 
\int_{\theta_j}^{\theta_{j+1}} \Big| \sum_{k< e^n} X(k) r^k \sqrt{k} e^{iky} \Big| dy \ge \frac{A}{2} + 5\log n, \qquad \text{ for some } 0\le j < \lceil ne^n\rceil. 
\end{equation} 
Since the random variables $X(k)$ are independent and rotationally invariant, it follows that 
\begin{equation} 
\label{6.4} 
\mathcal P_n \le ne^n \Big( \mathcal P_n^{'} + \mathcal P_n^{''} \Big), 
\end{equation} 
 where ${\mathcal P}_n^{'}$ is the probability that the inequality in \eqref{6.2} holds for $j=0$, and ${\mathcal P}_n^{''}$ is the 
 probability that the inequality in \eqref{6.3} holds for $j=0$.  
 
 Since $\text{Re} \sum_{k< e^n} X(k)r^k/\sqrt{k}$ is a real Gaussian with mean $0$ and variance $\tfrac 12 \sum_{k< e^n} r^{2k}/k$, 
 it follows that 	
 \begin{equation*} 
 \mathcal P_n^{'} \ll \exp\Big( - \Big(\sum_{k< e^n} \frac{r^{2k}}{k} + \frac A2 + 5\log n \Big)^2{\Big /}\sum_{k< e^n} \frac{r^{2k}}{k} \Big) 
 \le \exp\Big( - \sum_{k< e^{n} } \frac{r^{2k}}{k} - A -10 \log n\Big), 
 \end{equation*} 
 where we used that $\int_{t}^{\infty} e^{-x^2/2} dx \ll e^{-t^2/2}$ for $t\ge 0$.   Thus, as $1 \leq r \leq e^{1/K}$, 
 \begin{equation} 
 \label{6.5} 
 {\mathcal P}_n^{'} \ll \frac{e^{-n-A}}{n^{10}}. 
 \end{equation} 
 
 Now we turn to the task of estimating ${\mathcal P}_n^{''}$.   By H{\" o}lder's inequality, it follows 
 that if \eqref{6.3} holds (with $j=0$ there) then for any integer $\ell \ge 1$ we must have 
 $$ 
 \theta_1^{2\ell-1} \int_{0}^{\theta_1} \Big| \sum_{k< e^n} X(k) r^k \sqrt{k} e^{iky} \Big|^{2\ell}  dy \ge \Big( \frac A2 + 5\log n \Big)^{2\ell}.
 $$ 
 Therefore, by Chebyshev's inequality, 
 \begin{align*}
 {\mathcal P}_n^{''} &\le \Big(\frac A2 +5\log n\Big)^{-2\ell} \theta_1^{2\ell -1}  \int_0^{\theta_1} \E \Big[  \Big| \sum_{k< e^n} X(k) r^k \sqrt{k} e^{iky} \Big|^{2\ell} \Big] dy \\
 &=  \Big(\frac A2 +5\log n\Big)^{-2\ell} \theta_1^{2\ell} \ell! \Big( \sum_{k< e^n} kr^{2k} \Big)^{\ell},
 \end{align*}
 since $\sum_{k< e^n} X(k) r^{k} \sqrt{k} e^{iky}$ is distributed like a complex Gaussian with variance $\sum_{k< e^n} kr^{2k}$.   Since $\ell! \le \ell^\ell$, $\theta_1 \le 2\pi/(ne^n)$ and $\sum_{k\le e^n} kr^{2k} \le e^{2+2n}$ (as $r\le e^{1/K}$) it follows that 
 $$ 
 {\mathcal P}_n^{''} \le \Big(\frac A2 +5\log n\Big)^{-2\ell}  \ell^\ell \Big( \frac{2\pi e}{n}\Big)^{2\ell}. 
 $$
 Upon choosing  $\ell$ to be an integer around $n(A/2+5\log n)$, we see that ${\mathcal P}_n^{''} \ll e^{-n-A}/n^{10}$; indeed ${\mathcal P}_n^{''}$ is much smaller than this, but we have just matched our earlier bound in \eqref{6.5}.  
 Combining this with \eqref{6.4} and \eqref{6.5}, it follows that ${\mathcal P}_n \ll e^{-A}/n^{9}$, which when inserted in \eqref{6.1} yields \cref{prop5.2}.  Note that the factor $1/n^{9}$ ensures that the sum in \eqref{6.1} converges, and it is for this reason that the ``safety valve" term $10\log n$ was introduced in Definition \ref{def5.1} (and one of the reasons why Lemma \ref{lem:Harper} has the flexible term $h(j)$). 
  \hfill \qed
  
%
%
\section{Proof of \cref{prop5.3}}
\label{sec:BallotSavings}

\noindent The proof of the upper bound in \cref{thm:main} has finally been reduced to \cref{prop5.3}, which we shall now obtain as an application of Lemma \ref{lem:Harper}. We focus on showing that 
 \begin{equation} 
 \label{7.1} 
 \E \Big[ \ind_{\cG_r(A,\theta;K)} |F(re^{i\theta})|^2 \Big] \asymp \frac{A}{\sqrt{\log K}} K. 
 \end{equation} 
 Since $\cG_r(A;K)$ is the event where $\cG_r(A,\theta;K)$ holds for all $\theta$, it 
 then follows that 
 $$ 
 \E \Big[ \ind_{\cG_r(A;K)} \int_0^{2\pi} |F(re^{i\theta})|^2 d\theta \Big] 
 \leq \int_0^{2\pi} \E \Big[ \ind_{\cG_r(A,\theta;K)} |F(re^{i\theta})|^2  \Big] d\theta \ll \frac{A}{\sqrt{\log K}} K, 
 $$ 
 completing the proof of the proposition. By rotational symmetry it is enough to establish \eqref{7.1} in 
 the case $\theta =0$.  
 
 Put $x_k =\text{Re}(X(k))$ so that the $x_k$ are independent real normal variables with mean $0$ and 
 variance $1/2$.   Then we may write 
 \begin{equation} 
 \label{7.2} 
 \E \Big[ \ind_{\cG_r(A,0;K)} |F(r)|^2  \Big]
= \frac{1}{\pi^{\lfloor K \rfloor/2}} \intmult_{\mathcal{R}}  \exp\Big( \sum_{k \leq K} \big( \frac{  2x_k r^k}{\sqrt{k}} - x_k^2 \big) \Big) dx_1 dx_2  \cdots dx_{\lfloor K\rfloor},
\end{equation} 
where $\mathcal{R} \subseteq \R^{\lfloor K\rfloor}$ is the region given by $(x_k)_{k=1}^{\lfloor K\rfloor} \in \R^{\lfloor K\rfloor}$ satisfying
$$
	\sum_{k < e^n} \Big(\frac{x_k r^k }{\sqrt{k}} - \frac{r^{2k}}{k} \Big) \leq A +10 \log n
$$ 
for all $1\le n \le \log K$.  Write $y_k = x_k-r^k/\sqrt{k}$, which completes the square and allows us 
to express the integral in \eqref{7.2} as 
$$
 \exp\Big({\sum_{k \leq K} \frac{r^{2k}}{k} }\Big)  \frac{1}{\pi^{\lfloor K\rfloor/2}}  \intmult_{\mathcal{R}'} \exp\Big({-\sum_{k \leq K} y_k^2 }\Big) dy_1 \cdots dy_{\lfloor K\rfloor}, 
 $$
where ${\mathcal R}' \subseteq \R^{\lfloor K\rfloor}$ is the region given by $\sum_{k< e^n} y_kr^{k}/\sqrt{k} \le A+10\log n$ for all $1\le n\le \log K$. Since $1 \leq r \leq e^{1/K}$, it follows that 
\begin{equation} 
\label{7.3} 
 \E \Big[ \ind_{\cG_r(A,0;K)} |F(r)|^2  \Big] =\exp\Big( \sum_{k \leq K} \frac{r^{2k}}{k} \Big)  \P[\cB] 
\asymp K \P[\cB] , 
\end{equation} 
where $\cB$  is the event that, for all $1 \leq n \leq  \log K$, one has
\begin{equation} 
\label{7.4} 
\sum_{k< e^n} \frac{y_kr^{k}}{\sqrt{k}} \le A+ 10\log n, 
\end{equation}
for independent normal random variables $y_k$ with mean $0$ and variance $\tfrac 12$. 

To complete the proof, we are now ready to apply \cref{lem:Harper} with the random variables $G_m = \sum_{e^{m-1} \le k< e^m} y_kr^k/\sqrt{k}$ for $1 \leq m \leq \log K$. Note that the variables $G_m$ are Gaussians with variance $\tfrac{1}{2}\sum_{e^{m-1} \leq k < e^m} r^{2k}/k$ which lies between $1/20$ and $20$ as required in \cref{lem:Harper}. Thus,  we deduce that $\P [\cB] \asymp A/\sqrt{\log K}$.  Using this estimate in  
\eqref{7.3} now finishes the proof of \eqref{7.1} in the case $\theta=0$.  This completes the proof of the proposition, and hence the upper bound of \cref{thm:main}.  \hfill \qed  \\ 

\begin{center}
{\bf Part II: The lower bound of \cref{thm:main}}
\end{center}

%
%
\section{Deducing the lower bound from two propositions}
\label{sec:ProofLower}

Now, we turn our focus to the lower bound portion of \cref{thm:main}. If $0 \leq q \leq \frac{1}{2}$ then by H\"{o}lder's inequality  it follows that
\[
 \E \big[ |A(N)| \big] \leq \big( \E \big[ |A(N)|^{3/2} \big] \big)^{\frac{2-4q}{3-4q}} \big( \E \big[ |A(N)|^{2q} \big] \big)^{\frac{1}{3-4q}}.
\]
The upper bound $\E [ |A(N)|^{3/2} ] \ll (\log N)^{-3/4}$ in \cref{thm:main}  has already been established. 
If we knew the lower bound $\E[ |A(N)| ] \gg (\log N)^{-1/4}$, then the desired lower bound for $\E [ |A(N)|^{2q} ]$ would follow. 
Thus, it is enough to prove the lower bound in the range $\frac{1}{2} \leq q \leq 1$. 

Here we shall reduce the lower bound part of the main theorem to propositions involving bounds for the second moment of $F_K(z)$ defined by \eqref{2.1} for any real number $K \geq 1$. 

\begin{proposition} 
	\label{prop8.1}
	For $\frac{1}{2} \leq q \leq1$ and $0 <  r < 1$ we have
	$$
	 \E[ |A(N)|^{2q} ] \geq \frac{1}{4}\Big( \E\Big[ \Big( \frac{1}{2\pi N} \int_0^{2\pi}| F_{N/2}(re^{i\theta})|^2 d\theta   \Big)^q \Big] -  \E\Big[ \Big( \frac{ r^{N}}{2\pi N}   \int_0^{2\pi}| F_{N/2}(e^{i\theta})|^2 d\theta \Big)^q \Big]  \Big).
	$$
\end{proposition}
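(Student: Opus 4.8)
The plan is to peel off the largest part of a partition of $N$ to get a one-step recursion for $A(N)$, and then, after conditioning on the ``small'' variables $X(k)$ with $k\le N/2$, to recognise $A(N)$ as a complex Gaussian shifted by a conditionally deterministic amount, a shift that a symmetrization argument will discard. For the recursion I would use that any partition $\lambda$ of $N$ has at most one part exceeding $N/2$, and that such a part (if present) occurs with multiplicity $1$: splitting $A(N)=\sum_{|\lambda|=N}a(\lambda)$ of \eqref{2.2} according to whether $\lambda_1\le N/2$ or $\lambda_1=n$ for some integer $N/2<n\le N$, and writing $a(\lambda)=\frac{X(n)}{\sqrt n}a(\sigma)$ with $\sigma=\lambda\setminus\{n\}$ a partition of $N-n<N/2$ in the second case, gives
$$
A(N)=\widetilde A(N)+\sum_{N/2<n\le N}\frac{X(n)}{\sqrt n}\,A(N-n),\qquad \widetilde A(N):=\sum_{\substack{|\lambda|=N\\ \lambda_1\le N/2}}a(\lambda).
$$
Let $\mathcal{F}=\sigma(X(k):k\le N/2)$. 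Since a partition of $N-n<N/2$ involves only $X(k)$ with $k\le N-n$, each $A(N-n)$ above is $\mathcal{F}$-measurable and equals the coefficient of $z^{N-n}$ in $F_{N/2}$; $\widetilde A(N)$ is likewise $\mathcal{F}$-measurable, while the $X(n)$ with $N/2<n\le N$ are i.i.d.\ standard complex Gaussians independent of $\mathcal{F}$. Hence, conditionally on $\mathcal{F}$, $A(N)=\mu+W$ with $\mu=\widetilde A(N)$ a constant and $W$ a mean-zero complex Gaussian of variance $v:=\sum_{N/2<n\le N}|A(N-n)|^2/n$.

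Next I would discard $\mu$. For $q\ge\tfrac12$, the functional $\|X\|_{2q}:=\E[|X|^{2q}]^{1/(2q)}$ is a norm. Since a circularly symmetric complex Gaussian satisfies $-W\stackrel{d}{=}W$, one has $\mu-W\stackrel{d}{=}\mu+W$, and so by the triangle inequality
$$
2\|W\|_{2q}=\big\|(\mu+W)-(\mu-W)\big\|_{2q}\le\|\mu+W\|_{2q}+\|\mu-W\|_{2q}=2\|\mu+W\|_{2q}.
$$
Applying this conditionally on $\mathcal{F}$, and using that given $\mathcal{F}$ the variable $|W|^2/v$ is exponential with mean $1$ (so $\E[|W|^{2q}\mid\mathcal{F}]=\Gamma(1+q)\,v^q$ with $\Gamma(1+q)\ge\tfrac12$ for $q\in[\tfrac12,1]$), I get $\E[|A(N)|^{2q}\mid\mathcal{F}]\ge\tfrac12 v^q$, hence $\E[|A(N)|^{2q}]\ge\tfrac12\,\E[v^q]$.

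Finally I would bound $\E[v^q]$ below in terms of the integrals in the proposition. Writing $m=N-n$, so that $0\le m<N/2$, and using $\tfrac1n\ge\tfrac1N\ge\tfrac{r^{2m}}{N}$ (valid as $0<r<1$), one gets $v\ge\tfrac1N\sum_{0\le m<N/2}|A(m)|^2r^{2m}$. By Parseval, $\tfrac1{2\pi}\int_0^{2\pi}|F_{N/2}(\rho e^{i\theta})|^2\,d\theta=\sum_{m\ge0}|c_m|^2\rho^{2m}$, where $c_m$ is the coefficient of $z^m$ in $F_{N/2}$ and $c_m=A(m)$ for $m<N/2$; splitting into $m<N/2$ and $m\ge N/2$ and bounding $r^{2m}\le r^N$ on the latter gives $\sum_{0\le m<N/2}|A(m)|^2r^{2m}\ge P-r^NQ$, where $P:=\tfrac1{2\pi}\int_0^{2\pi}|F_{N/2}(re^{i\theta})|^2\,d\theta$ and $Q:=\tfrac1{2\pi}\int_0^{2\pi}|F_{N/2}(e^{i\theta})|^2\,d\theta$. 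Since $t\mapsto t^q$ is increasing and subadditive on $[0,\infty)$, $v^q\ge\tfrac1{N^q}\big(P^q-(r^NQ)^q\big)$ (trivially so when $P<r^NQ$). Taking expectations and combining with the previous step, $\E[|A(N)|^{2q}]\ge\tfrac1{2N^q}\big(\E[P^q]-r^{Nq}\E[Q^q]\big)$ when the right side is nonnegative, and $\E[|A(N)|^{2q}]\ge\tfrac1{4N^q}\big(\E[P^q]-r^{Nq}\E[Q^q]\big)$ in all cases, which is exactly the asserted inequality. All expectations here are finite, for instance $\E[|A(N)|^{2q}]\le\E[|A(N)|^2]^q=1$.

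The one step that is not routine bookkeeping is the symmetrization, which is precisely where the hypothesis $q\ge\tfrac12$ enters; everything else is the combinatorics of the largest-part decomposition together with elementary inequalities. A minor point to keep in mind is that the right-hand side of the proposition can be negative, in which case there is nothing to prove.
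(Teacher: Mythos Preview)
Your proof is correct and follows essentially the same route as the paper's: the same largest-part decomposition, conditioning on $X(k)$ for $k\le N/2$, recognising the Gaussian structure, and the same Parseval/subadditivity conclusion. The only cosmetic difference is in the symmetrization step---you use Minkowski's inequality conditionally (valid since $2q\ge 1$) to get $\E[|A(N)|^{2q}\mid\mathcal F]\ge\E[|W|^{2q}\mid\mathcal F]$ directly, whereas the paper argues via the elementary inequality $\max(|z+w|,|{-z}+w|)\ge|z|$ and loses a factor $\tfrac12$ there; as a result your argument in fact yields the proposition with constant $\tfrac12$ rather than $\tfrac14$.
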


\begin{proposition} 
\label{prop8.2}
Let $K\ge 10$ be a real number, and let $F(z)= F_K(z)$ be as in \eqref{2.3}.  Uniformly for $\frac{1}{2} \le q \le 1$ and $e^{-1/40} \leq  r < 1$ we have 
$$ 
\E\Big[ \Big(\int_0^{2\pi} |F_K(re^{i\theta})|^2 d\theta \Big)^q \Big] \gg \Big( \frac{K_r}{(1-q)\sqrt{\log K_r} + 1} \Big)^q,
$$
where  $\log K_r$ is the largest integer such that $K_r \leq  \min\{ \frac{-1}{4\log r}, K \}$.
\end{proposition}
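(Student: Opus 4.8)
The plan is to prove the lower bound by a Paley--Zygmund (second‑moment) argument built around the barrier events of \cref{def5.1}, using \cref{prop5.3} for the first moment and a tilted‑random‑walk computation for the second. Write $\mathcal I(r) = \int_0^{2\pi}|F_K(re^{i\theta})|^2\,d\theta$; by \cref{lem3}, $\E[\mathcal I(r)] = 2\pi\exp\big(\sum_{k\le K}r^{2k}/k\big)\asymp K_r$, while trivially $\mathcal I(r)\ge 2\pi|F_K(0)|^2 = 2\pi$, which already gives $\E[\mathcal I(r)^q]\gg1$ and disposes of the ``$+1$'' in the denominator. Throughout we work with $\log K_r$ as the effective number of scales: the barrier events $\cG_r(A,\theta;K)$ and $\cG_r(A;K)$, defined as in \cref{def5.1} but with $1\le n\le \log K_r$ and $e^{-1/40}\le r<1$, satisfy the analogues of \cref{prop5.2,prop5.3} with $\log K_r$ in place of $\log K$ (the proofs being parallel to those of the upper‑bound part; note $\sum_{K_r<k\le K}r^{2k}/k = O(1)$, so the frequencies beyond $K_r$ are a bounded perturbation in all Gaussian computations). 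Set $A = \min\{\lceil 1/(1-q)\rceil,\ \lfloor\sqrt{\log K_r}\rfloor\}$, so $1\le A\le\sqrt{\log K_r}$, and $m = \tfrac{A}{\sqrt{\log K_r}}K_r$; then $m\asymp K_r/((1-q)\sqrt{\log K_r}+1)$, and the goal is $\E[\mathcal I(r)^q]\gg m^q$.

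Rather than restrict $\mathcal I(r)$ to the global event $\cG_r(A;K)$, introduce the \emph{good integral}
\[
\mathcal I_{\mathrm{good}} = \int_0^{2\pi}\ind_{\cG_r(A,\theta;K)}\,|F_K(re^{i\theta})|^2\,d\theta\ \le\ \mathcal I(r),
\]
which carries the single‑angle barrier constraint \emph{inside} the integral; integrating the analogue of \cref{prop5.3} over $\theta$ gives the two‑sided bound $\E[\mathcal I_{\mathrm{good}}]\asymp m$. Applying Hölder's inequality to the pointwise identity $\mathcal I_{\mathrm{good}} = (\mathcal I_{\mathrm{good}}^{q})^{1/(2-q)}(\mathcal I_{\mathrm{good}}^{2})^{(1-q)/(2-q)}$, with conjugate exponents $2-q$ and $\tfrac{2-q}{1-q}$, yields
\[
\E[\mathcal I(r)^q]\ \ge\ \E[\mathcal I_{\mathrm{good}}^{q}]\ \ge\ \frac{(\E[\mathcal I_{\mathrm{good}}])^{2-q}}{(\E[\mathcal I_{\mathrm{good}}^{2}])^{1-q}}.
\]
Hence everything reduces to the second‑moment bound $\E[\mathcal I_{\mathrm{good}}^{2}]\ll m^2$, which then gives $\E[\mathcal I(r)^q]\gg m^{2-q}/m^{2(1-q)} = m^q$ as required. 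I stress that there is no room here for a polylogarithmic loss: any factor $L\ge1$ on the right of the second‑moment bound would only deliver $\E[\mathcal I(r)^q]\gg m^q/L^{1-q}$.

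To bound the second moment, expand
\[
\E[\mathcal I_{\mathrm{good}}^{2}] = \iint\E\big[\ind_{\cG_r(A,\theta_1;K)}\ind_{\cG_r(A,\theta_2;K)}\,|F(re^{i\theta_1})|^2|F(re^{i\theta_2})|^2\big]\,d\theta_1\,d\theta_2,
\]
and for each pair size‑bias by the product $|F(re^{i\theta_1})|^2|F(re^{i\theta_2})|^2$: the integrand becomes $\E[|F(re^{i\theta_1})|^2|F(re^{i\theta_2})|^2]\cdot\widetilde\P[\cG_r(A,\theta_1;K)\cap\cG_r(A,\theta_2;K)]$, with $\E[|F(re^{i\theta_1})|^2|F(re^{i\theta_2})|^2] = \exp\big(2\sum_{k\le K}\tfrac{r^{2k}}{k}(1+\cos k\phi)\big)\asymp K_r^2\min(K_r^2,\phi^{-2})$ for $\phi = \theta_1-\theta_2$, and under $\widetilde\P$ the walks $S_n(\theta_1),S_n(\theta_2)$ of \cref{def5.1} essentially coincide --- now carrying drift $\approx+1$ per scale --- up to the decoupling scale $n^\ast = \min\{\log(1/|\phi|),\log K_r\}$, then continue as two roughly independent driftless walks. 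For $|\phi|$ bounded away from $0$ ($n^\ast = O(1)$) this gives $\E[|F|^2|F|^2]\asymp K_r^2$ and $\widetilde\P[\cdots]\asymp(A/\sqrt{\log K_r})^2$ (two near‑independent ballot estimates via \cref{lem:Harper}), contributing $\asymp K_r^2(A/\sqrt{\log K_r})^2 = m^2$: the main term. For small $|\phi|$ the size‑biasing constant $\asymp K_r^2e^{2n^\ast}$ is large, but the barrier now forces the common drift‑$(+1)$ walk to stay under $A+10\log n$ on $[1,n^\ast]$ and to end near the barrier; removing its drift is a change of measure costing $\asymp e^{-2n^\ast}$, which exactly cancels the $e^{2n^\ast}$, leaving a residual ballot cost --- following the descending barrier and landing near it on $[1,n^\ast]$, then two conditionally independent ballots over $[n^\ast,\log K_r]$ --- of $\asymp (A/\sqrt{\log K_r})^2\,(\log n^\ast)^{O(1)}$. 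Thus angular separations $|\phi|\asymp e^{-n^\ast}$ contribute $\ll e^{-n^\ast}\,m^2\,(\log n^\ast)^{O(1)}$, and summing over $1\le n^\ast\le\log K_r$ gives $\ll m^2$. Combining the two ranges, $\E[\mathcal I_{\mathrm{good}}^{2}]\ll m^2$.

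The step I expect to be the real work --- and, presumably, the content of \cref{sec:TiltedProbability} --- is this small‑$|\phi|$ estimate: one must track the doubly‑tilted pair of walks through the decoupling scale, control the probability that the common walk stays under a linearly descending barrier on $[1,n^\ast]$ and ends near its current value, feed this into two conditionally independent ballot probabilities over $[n^\ast,\log K_r]$, and check that the powers of $n^\ast$, $A$ and $\log K_r$ align so that the large factors $e^{\pm2n^\ast}$ cancel and the sum over $n^\ast$ converges. This is the analogue in our model of the ``pivotal'' second‑moment computation in Harper's work, and it is here that \cref{lem:Harper} gets used --- including the flexible term $h(j)$, which is exactly what makes the sums over $n^\ast$ and over a discretization of $\theta$ converge. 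Everything else (working at the effective scale $\log K_r$, the Hölder interpolation, the first‑moment input from \cref{prop5.3}, the handling of bounded $K_r$, and the $|\phi|\asymp1$ range) is routine once the tilting computations behind \cref{prop5.2,prop5.3} are in hand.
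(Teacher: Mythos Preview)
Your overall architecture matches the paper's exactly: restrict the integral to angles satisfying a barrier event, apply H\"older in the form $\E[\mathcal I_{\mathrm{good}}^q]\ge(\E \mathcal I_{\mathrm{good}})^{2-q}/(\E \mathcal I_{\mathrm{good}}^2)^{1-q}$, obtain the first moment from a single ballot estimate, and bound the second moment by a two--point computation that decouples at the scale $n^\ast\approx\log(1/|\phi|)$. Your choice of $A$ is also essentially the paper's.

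There is, however, a genuine gap in the second--moment step. You re--use the event $\cG_r(A,\theta;K)$ of \cref{def5.1}, whose barrier is $A+10\log n$; for the lower bound the paper introduces a \emph{new} event $\cL_r(A,\theta;K)$ with barrier $A-5\log n$ (\cref{def10.1}). The sign of this logarithmic term is precisely what governs convergence of the angular integral. Trace the integrand at separation $|\phi|\asymp e^{-n^\ast}$: after size--biasing, the common walk on $[1,n^\ast]$ must stay under a barrier descending like $-n+A\pm c\log n$; undoing the drift by Girsanov leaves $e^{-n^\ast}\,\E[\ind_{W_n\le A\pm c\log n}\,e^{2W_{n^\ast}}]$, and on this event $e^{2W_{n^\ast}}\le e^{2A}(n^\ast)^{\pm 2c}$. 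Thus the ``residual ballot cost'' carries a factor $(n^\ast)^{\pm 2c}$, not $(\log n^\ast)^{O(1)}$ as you write. With $+10\log n$ this is $(n^\ast)^{+20}$, and the sum over $1\le n^\ast\le\log K_r$ of $(n^\ast)^{+20}/(1+\log(K_r/e^{n^\ast}))$ is dominated by $n^\ast\approx\log K_r$, producing exactly the polylogarithmic loss you correctly flag as fatal. With $-5\log n$ one gets $(n^\ast)^{-10}$ and the sum converges; this is the source of the $1/M^8$ in \cref{prop10.2}. So the ``flexible term $h(j)$'' in \cref{lem:Harper} is indeed the mechanism for convergence, but for the second moment one needs the barrier to slope \emph{down}, not up.

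A secondary point: even with the correct barrier the second moment is only $\ll e^{2A}m^2$, not $\ll m^2$ (the $e^{2A}$ comes from bounding $e^{2A_\theta(M)}$ on the event, cf.\ \eqref{11.8}). This is harmless because your choice of $A$ makes $e^{2A(1-q)}=O(1)$, but you should expect and track it rather than assert $\E[\mathcal I_{\mathrm{good}}^2]\ll m^2$.
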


With $r= e^{-N/V}$ for a suitably large, but fixed, constant $V$, \cref{prop8.2} gives 
$$ 
\E\Big[ \Big( \frac{1}{2\pi N} \int_0^{2\pi}| F_{N/2}(re^{i\theta})|^2 d\theta   \Big)^q \Big] 
\gg \Big( \frac{1}{V (1+(1-q)\sqrt{\log N})}\Big)^q, 
$$ 
while \cref{prop3.2} gives 
$$ 
\E\Big[ \Big( \frac{ r^{N}}{2\pi N}   \int_0^{2\pi}| F_{N/2}(e^{i\theta})|^2 d\theta \Big)^q \Big]  \ll \Big(\frac{e^{-V}}{1+(1-q)\sqrt{\log N}}\Big)^q.
$$
Combining these estimates with \cref{prop8.1}, and choosing $V$ to be a large enough constant, we obtain
 the lower bound in \cref{thm:main}  in the range $\frac 12 \le q\le 1$.  
  
%
%
\section{Proof of \cref{prop8.1}}

As with the upper bound in \cref{sec:DecomposeUpper}, we begin by decomposing the definition of $A(N)$ in terms of $a(\lambda)$ for partitions $\lambda$ (see \eqref{2.1} and \eqref{2.2}), grouping terms according to the size of the largest part $\lambda_1$.  
Define
\[
A_1(N) =  \sum_{\substack{|\lambda|=N \\ N/2 < \lambda_1 \leq N}} a(\lambda) =\sum_{N/2 < n\le N} \frac{X(n)}{\sqrt{n}} A(N-n) 
\quad \text{and} \quad 
\widetilde{A}_1(N) = \sum_{\substack{|\lambda|=N \\ \lambda_1 \leq N/2}} a(\lambda), 
\]
so that $A(N) = A_1(N) +\widetilde{A}_1(N)$.  Using that $X(n)$ is distributed identically to $-X(n)$ for $N/2<n\le N$, we see that 
\begin{equation} 
\label{9.1}  
\E [|A(N)|^{2q}] = \frac 12 \E\Big [ |A_1(N) + \widetilde{A}_1(N)|^{2q} + |- A_1(N) + \widetilde{A}_1(N)|^{2q} \Big] \ge \frac 12 \E [ |A_1(N)|^{2q}],  
\end{equation} 
where the last inequality holds because $\max( |-z+w|, |z+w| ) \ge |z|$ for any two complex numbers $z$ and $w$.  
 
To obtain a lower bound on $\E[|A_1(N)|^{2q}]$, we first condition on the variables $X(k)$ for all $k \le N/2$.   Note that $A(N-n)$ is then determined for all $N/2 < n \le N$.   Therefore 
$$ 
A_1(N) = \sum_{N/2 < n\le N} \frac{X(n)}{\sqrt{n}} A(N-n), 
$$ 
being a linear combination of the independent standard complex Gaussians $X(n)$, is distributed like a complex Gaussian with mean $0$ and variance $\sum_{N/2 <n \le N} |A(N-n)|^2/n$.  With $\E_1$ denoting the conditional expectation (fixing $X(k)$ for $k\le N/2$), it follows that 
$$
\E_1 \Big[ |A_1(N)|^{2q} \Big]  = C_q \Big( \sum_{N/2 < n \le N} \frac{|A(N-n)|^2}{n} \Big)^q,  
$$ 
where $C_q = \E [ |Z|^{2q}]$ is the $2q$-th moment of a standard complex Gaussian $Z$ with mean $0$ and variance $1$.  H{\" o}lder's inequality gives 
$$
\E [ |Z|^{2q} ] \ge \frac{(\E [ |Z|^2])^{2-q}}{( \E [ |Z|^4 ])^{1-q}} = \frac{1}{2^{1-q}},
$$ 
 and so we obtain 
 $$ 
 \E_1 \Big[ |A_1(N)|^{2q} \Big]  \ge \frac{1}{2^{1-q}}  \Big( \sum_{N/2 < n \le N} \frac{|A(N-n)|^2}{n} \Big)^q \ge \frac 12 \Big( \frac 1N \sum_{n<N/2} |A(n)|^2 \Big)^q.
 $$
Now taking the full expectation and using \eqref{9.1}, we deduce that
\begin{equation} \label{9.2} 
\E\big[|A(N)|^{2q}\big] \geq  \frac{1}{4} \E \Big[ \Big( \frac{1}{N} \sum_{n< N/2} |{A}(n)|^2\Big)^q \Big].
\end{equation} 

Write 
$$ 
F_{N/2}(z) = \exp\Big( \sum_{k\le N/2} \frac{X(k)}{\sqrt{k}} z^k\Big) = \sum_{n=0}^{\infty} {\widetilde A}_1(n) z^n, 
$$ 
and note that ${\widetilde A}_1(n) = A(n)$ for $n\le N/2$.   Note that Parseval's identity gives, for any $0< r \le 1$, 
\begin{align*}
\sum_{n < N/2} |A(n)|^2 &= \sum_{n< N/2} |\widetilde{A}_1(n)|^2 \ge \sum_{n=0}^{\infty} |\widetilde{A}_1(n)|^2 r^{2n} - r^{N} 
\sum_{n=0}^{\infty} |\widetilde{A}_1(n)|^2 
\\
&= \frac{1}{2\pi} \int_0^{2\pi} |F_{N/2} (re^{i\theta})|^2 d\theta - \frac{r^{N}}{2\pi} \int_0^{2\pi} |F_{N/2}(e^{i\theta})|^2 d\theta.
\end{align*} 
Since $|z+w|^q \le |z|^q + |w|^q$ for $q\le 1$, it follows that 
$$
\Big(\sum_{n < N/2} |A(n)|^2 \Big)^q \ge\Big( \frac{1}{2\pi} \int_0^{2\pi} |F_{N/2} (re^{i\theta})|^2 d\theta\Big)^q - \Big(  \frac{r^{N}}{2\pi} \int_0^{2\pi} |F_{N/2}(e^{i\theta})|^2 d\theta\Big)^q, 
$$ 
and inserting this in \eqref{9.2}, we obtain  \cref{prop8.1}. \hfill \qed
 
%
%
\section{Plan for the proof of \cref{prop8.2}}

The proof of the lower bound in \cref{thm:main} has now been reduced to establishing \cref{prop8.2}.  Let ${\mathcal L} ={\mathcal L}(X)$ denote any (random) subset of $\theta \in [0,2\pi)$ with the random subset depending possibly on the instantiation of the random variables $X$.   Using H{\" o}lder's inequality, we obtain 
\begin{align} 
\label {10.1} 
\E \Big[ \Big( \frac{1}{2\pi} \int_0^{2\pi} |F(re^{i\theta})|^2 d\theta\Big)^q \Big] &\ge \E \Big[ \Big( \frac{1}{2\pi} \int_{{\mathcal L}(X)} |F(re^{i\theta})|^2 d\theta\Big)^q \Big] \nonumber \\ 
& \geq \frac{\displaystyle \Big( \E\Big[ \frac{1}{2\pi} \int_{\cL(X)} |F(re^{i\theta})|^2 d\theta \Big] \Big)^{2-q} }{\displaystyle \Big( \E\Big[  \Big(\frac{1}{2\pi} \int_{\cL(X)} |F(re^{i\theta})|^2 d\theta \Big)^2 \Big] \Big)^{1-q} }.
\end{align}
We apply this idea to a carefully chosen random subset ${\mathcal L}(X)$ where the second and fourth moments will be of comparable size so that there is no loss involved in applying H{\" o}lder's inequality in \eqref{10.1}.   The random set ${\mathcal L}(X)$ is defined similarly to   \cref{def5.1}, keeping once again the ballot problem in mind.  

\begin{definition}
	  \label{def10.1} Let $K \geq 10$. Suppose $e^{-1/40} \leq r < 1$ and define $\log K_r$ to be the largest integer such that $K_r \leq \min\big\{ \frac{-1}{4\log r}, K\big\}$.  Let $A$ be a real number with $1\le A \le \sqrt{\log K_r}$. Define $\cL(\theta) = \cL_r(A,\theta;K)$ to be the  following event:  For each $1\le n \le \log K_r$ one has 
\[
 \sum_{k< e^n} \Big( \mathrm{Re} \frac{X(k) r^k e^{ik\theta}}{\sqrt{k}}  - \frac{r^{2k}}{k} \Big) \le A - 5\log n. 
\]
 Define $\cL = \cL_r(A; K)$ to be the random subset of $\theta \in [0,2\pi]$ such that $\cL(\theta)$ holds.   
\end{definition}

With this choice of the random subset ${\mathcal L}$, we seek a lower bound for the numerator in \eqref{10.1} and an upper bound for the denominator there.  We start with the easier case of the lower bound.   Since ${\mathcal L}$ denotes the subset of $\theta$ for which ${\mathcal L}(\theta)$ holds, we find that 
$$ 
\E \Big[ \frac{1}{2\pi} \int_{\mathcal L} |F(re^{i\theta})|^2 d\theta \Big] = \E \Big[ \frac{1}{2\pi} \int_0^{2\pi} \ind_{\cL(\theta)} |F(re^{i\theta})|^2 d\theta \Big] = \frac 1{2\pi} \int_0^{2\pi}  \E \big[\ind_{\cL(\theta)} |F(re^{i\theta})|^2 \big] d\theta, 
$$
and by the rotational symmetry of the random variables $X$, this equals
$$
\E \big[ \ind_{\cL(0)} |F(r)|^2 \big].
$$
 Arguing as in \cref{sec:BallotSavings} we may see that 
 \begin{equation*}
 \E\big[ \ind_{\cL(0)} |F(r)|^2 \big] = \exp\Big( \sum_{k \leq K} \frac{r^{2k}}{k} \Big) \mathbb{P}[\cB],
 \label{eqn:shift-probability-2}
 \end{equation*}	
 where $\cB$ is the event that, for all $1 \leq n \leq \log K_r$, one has
\[
\sum_{k < e^n} \frac{y_k r^k}{\sqrt{k}} \leq A - 5 \log n,
\]
for independent normal random variables $y_k$ with mean $0$ and variance $\frac{1}{2}$.   We now invoke \cref{lem:Harper} with the random variables $G_m = \sum_{e^{m-1} \leq k < e^{m}} y_k r^{k}/\sqrt{k}$ for $1 \le m\le \log K_r$.   Note that these variables $G_m$ are Gaussian with variance $\frac{1}{2}\sum_{e^{m-1} \leq k< e^{m}} r^{2k}/k$ which lies between $1/20$ and $20$ as required in \cref{lem:Harper}.  Therefore it follows that $\mathbb{P}(\cB) \asymp A/\sqrt{\log K_r}$, and we conclude that 
\begin{equation} 
\label{10.2} 
\E \Big[ \frac{1}{2\pi} \int_{\mathcal L} |F(re^{i\theta})|^2 d\theta \Big]  \gg \frac{A}{\sqrt{\log K_r}} \exp\Big( \sum_{k \leq K} \frac{r^{2k}}{k} \Big) 
\gg \frac{A K_r}{\sqrt{\log K_r}}. 
\end{equation}

Now we turn to the harder problem of obtaining satisfactory upper bounds for the denominator in \eqref{10.1}.   Expanding out we see that 
$$
\E \Big[ \Big(\frac{1}{2\pi} \int_{\cL} |F(re^{i\theta})|^2 d\theta \Big)^2 \Big] = \frac{1}{(2\pi)^2} \E \Big[ \int_{0}^{2\pi} \int_0^{2\pi} 
\ind_{\cL(\theta_1)} |F(re^{i\theta_1})|^2 \ind_{\cL(\theta_2)}|F(re^{i\theta_2})|^2 d\theta_1 d\theta_2 \Big], 
$$
and upon writing $\theta = \theta_2 -\theta_1$ (and taking $\theta$ to be in $[-\pi, \pi)$) and using rotational symmetry this equals 
$$ 
\frac{1}{2\pi} \int_{-\pi}^{\pi} \E \big[ \ind_{\cL(0)} |F(r)|^2 \ind_{\cL(\theta)} |F(re^{i\theta})|^2 \big] d\theta. 
$$

\begin{proposition}	\label{prop10.2}
With notations as above, and any $\theta \in [-\pi, \pi)$ we have
$$ 
\E  \big[ \ind_{\cL(0)} |F(r)|^2 \ind_{\cL(\theta)} |F(re^{i\theta})|^2 \big]   \ll A^2 e^{2A} \frac{K_r^2}{\log K_r} \frac{\min (K_r,2\pi/|\theta|)}{(\log \min (K_r, 2\pi/|\theta|))^8}. 
$$ 
\end{proposition}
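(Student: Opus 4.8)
The plan is to complete the square in the Gaussian integral (the device behind \eqref{7.3}) and then estimate a two-scale ballot-type probability using \cref{lem:Harper}. Put $M = \min(K_r, 2\pi/|\theta|)$ and let $m_0$ be the largest integer with $e^{m_0}\le M$, so $m_0 = \log M + O(1)$. Writing $|F(r)|^2|F(re^{i\theta})|^2 = \exp\big(2\Re\sum_{k\le K}\frac{X(k)r^k(1+e^{ik\theta})}{\sqrt k}\big)$ and substituting $X(k)\mapsto X(k)+\overline{\beta_k}$ with $\beta_k = r^k(1+e^{ik\theta})/\sqrt k$ (which replaces the Gaussian density by itself times $e^{-|\beta_k|^2}$), one gets
\[
\E\big[\ind_{\cL(0)}\ind_{\cL(\theta)}|F(r)|^2|F(re^{i\theta})|^2\big] = \exp\Big(\sum_{k\le K}\frac{r^{2k}|1+e^{ik\theta}|^2}{k}\Big)\,\widetilde{\P}\big[\cL(0)\cap\cL(\theta)\big],
\]
where under $\widetilde{\P}$ the $X(k)$ are independent with the original covariance but mean $\overline{\beta_k}$. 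Since $|1+e^{ik\theta}|^2 = 2+2\cos(k\theta)$, $\sum_{k\le K}r^{2k}/k = \log K_r + O(1)$, and $\sum_{k\le K}r^{2k}\cos(k\theta)/k = \log M + O(1)$ (the last by Abel summation, the partial sums of $\cos(k\theta)$ being $O(1/|\theta|)$ and $r^{2k}/k = O(1/M)$ once $k\ge M$, together with $\cos(k\theta) = 1+O((k\theta)^2)$ for $k\le M$), the prefactor is $\asymp K_r^2 M^2$; so it suffices to prove $\widetilde{\P}[\cL(0)\cap\cL(\theta)]\ll A^2 e^{2A}/(M\log K_r\,(\log M)^8)$.

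Under $\widetilde{\P}$ the partial sums defining $\cL(0)$ and $\cL(\theta)$ become $\mu_n + W_n$ and $\mu_n + V_n$, where $\mu_n = \sum_{k<e^n}r^{2k}\cos(k\theta)/k$ is deterministic and $W_n = \Re\sum_{k<e^n}X'(k)r^k/\sqrt k$, $V_n = \Re\sum_{k<e^n}X'(k)r^k e^{ik\theta}/\sqrt k$ are mean-zero Gaussian walks in $n$ ($X'(k) = X(k)-\overline{\beta_k}$). The point is that $\mu_n = \min(n,m_0)+O(1)$ for $1\le n\le\log K_r$ (it grows like $\sum 1/k$ while $\cos(k\theta)\approx 1$, then saturates): so at scales $n\le m_0$ the essentially single walk $W_n$ must stay below $A-5\log n-n+O(1)$, whose $-n$ drift yields the exponential saving and whose $-5\log n$ safety valve (from \cref{def10.1}) yields a polynomial one — indeed, for $A\le m_0$ the constraint at $n = m_0$ alone gives $\widetilde{\P}[W_{m_0}\le -m_0+A-5\log m_0+O(1)]\ll e^{2A}m_0^{-10}/M$, the factor $m_0^{-10}$ arising as the cross-term in the Gaussian tail exponent $(m_0+5\log m_0-A)^2/(2\,\mathrm{Var}\,W_{m_0})$. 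Meanwhile $W$ and $V$ are strongly correlated at scales $\le m_0$ (correlation $1-O(1/m_0)$) but decouple afterwards: for $n>m_0$, $\mathrm{Cov}(W_n-W_{m_0},V_n-V_{m_0})=O(1)$ while both variances are $\asymp n-m_0$; and, as in \cref{sec:BallotSavings}, the unit-scale increments of $W$ and of $V$ have variance in $[1/20,20]$.

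With these facts I would condition on the low frequencies $X'(k)$, $k<e^{m_0}$, and keep only the constraints at scales $m_0\le n\le\log K_r$. Using the decoupling above $m_0$ (after discarding the $O(1)$ constraints at scales within $O(1)$ of $m_0$ and coupling the rest to a genuinely independent walk), the conditional probability of the $\cL(0)$- and $\cL(\theta)$-constraints factors as a product of two one-dimensional ballot probabilities. Writing $W_{m_0}=-m_0+A-5\log m_0-s+O(1)$ with $s\ge -O(1)$, each factor is of the form $\widetilde{\P}[\sum_{m\le j}G_m\le a+h(j),\ 1\le j\le\log K_r-m_0]$ with $a=\max(s,1)+O(1)$ and $|h(j)|\ll\log j$ (coming from $\mu_n$ and the $5\log n$), so \cref{lem:Harper} bounds it by $\ll\min(1,(s+O(1))/\sqrt{\log K_r})$. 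Multiplying the two factors by the density of $W_{m_0}$ near this value ($\ll(e^{2A}m_0^{-10}/M)\,e^{-2s}$) and integrating over $s$ gives $\widetilde{\P}[\cL(0)\cap\cL(\theta)]\ll (e^{2A}/(M m_0^{10}))\int e^{-2s}\min(1,(s+O(1))/\sqrt{\log K_r})^2\,ds \ll e^{2A}/(M(\log M)^{10}\log K_r)$, which is $\ll A^2 e^{2A}/(M(\log M)^8\log K_r)$ since $A\ge 1$. This, with the prefactor, is the proposition; the degenerate cases — $M=K_r$ (no scales lie above $m_0$ and only the exponential saving is used), $A\ge m_0$ (the scale-$m_0$ constraint is not binding, but then the target is $\gg 1$ or else all parameters are of bounded size), and $M$ of bounded size — are disposed of by cruder estimates of the same flavour.

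The step I expect to be the main obstacle is the rigorous treatment of the scales $\le m_0$ and the junction at $m_0$: controlling a Gaussian walk with positive drift below a slowly decreasing barrier, keeping the parameter $a$ of \cref{lem:Harper} correctly tied to the slack $s$ left at scale $m_0$ and integrating over it; and making the decoupling of $W$ and $V$ above $m_0$ precise by controlling the residual $O(1)$ covariance so that the factorisation into two one-dimensional ballot problems is legitimate. By contrast the two headline savings — the exponential $M^{-1}$ and the polynomial $(\log M)^{-10}$ (hence the claimed $(\log M)^{-8}$) — are immediate from the single Gaussian tail bound at scale $m_0$.
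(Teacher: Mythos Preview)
Your approach is correct and closely parallels the paper's, though with the order of operations reversed in a way that creates one extra technical wrinkle.

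You perform a global Girsanov tilt at the outset, turning the expectation into the prefactor $K_r^2 M^2$ times a tilted probability $\widetilde{\P}[\cL(0)\cap\cL(\theta)]$, and then harvest the savings from the barrier at scale $m_0$ (the $e^{2A}m_0^{-10}/M$) and from two decoupled ballot problems above $m_0$. The paper instead splits first into low frequencies $k<e^M$ and high frequencies, keeps the low-frequency exponential $e^{2A_0(M)+2A_\theta(M)}$ explicit, and tilts only the high-frequency part (this is \cref{prop11.1}). Both routes yield the same three savings.

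The difference matters at the junction $m_0$. In your framework you condition on $W_{m_0}$ and assert that \emph{each} ballot factor has parameter $a=s+O(1)$ with $s=-T-W_{m_0}$; but the $V$-ballot depends on $V_{m_0}$, not $W_{m_0}$. The fix is easy (AM--GM plus the $(W,V)$-symmetry lets you bound the product by $\min(1,(s_W+O(1))/\sqrt{L})^2$ and then drop the $V_{m_0}$-constraint, reducing to exactly the one-dimensional integral you wrote), but it should be stated. The paper sidesteps this two-variable issue entirely: it simply bounds $e^{2A_\theta(M)}\le e^{2(A-5\log M)}$ from the barrier, after which only a single Gaussian integral over $A_0(M)$ remains, producing the factor $(A^2+M)e^{-M}$. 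This is cleaner and handles the regime $A\gtrsim m_0$ uniformly, so your separate ``degenerate case'' discussion becomes unnecessary. For the decoupling of $W$ and $V$ above $m_0$, the paper supplies an explicit device --- the pointwise comparison of a bivariate Gaussian density with a product of univariate ones, \eqref{12.1}--\eqref{12.3} --- which is what you would need to make your ``coupling the rest to a genuinely independent walk'' precise.
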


We postpone the proof of \cref{prop10.2} to the next section, and assuming this bound now finish our proof of 
\cref{prop8.2}.   Applying \cref{prop10.2} we obtain 
$$ 
\E \Big[ \Big(\frac{1}{2\pi} \int_{\cL} |F(re^{i\theta})|^2 d\theta \Big)^2 \Big]  \ll A^2 e^{2A} \frac{K_r^2}{\log K_r} \int_{-\pi}^{\pi} \frac{\min (K_r,2\pi/|\theta|)}{(\log \min (K_r, 2\pi/|\theta|))^8} d\theta \ll A^2 e^{2A} \frac{K_r^2}{\log K_r}. 
$$ 
Using this upper bound for the denominator in \eqref{10.1} together with the lower bound for the numerator (given in \eqref{10.2}) we conclude that 
$$ 
\E\Big[ \Big( \frac{1}{2\pi} \int_0^{2\pi} |F(re^{i\theta})|^2 d\theta \Big)^q\Big] \gg e^{-2A(1-q)} \Big( \frac{A K_r}{\sqrt{\log K_r}}\Big)^{q}.
$$ 
Selecting $A = \sqrt{\log K_r}/( (1-q) \sqrt{\log K_r} + 1)$ completes the proof of \cref{prop8.2}.

%
%
\section{Proof of \cref{prop10.2}}
\label{sec:TiltedProbability}


 Given $\theta \in [-\pi, \pi)$, define $M = M(r,\theta)$ to be the smallest integer such that 
\begin{equation}
e^M \ge \min\{ 10^3/|\theta|, K_r/e \}.
\label{11.1}
\end{equation}
Note that $\log K_r \ge 2$ in \cref{def10.1}, and so $M \ge 1$.
Set 
\begin{equation} 
\label{11.2} 
A_0(M) = \text{Re}  \sum_{k< e^M}\Big( \frac{X(k)r^k}{\sqrt{k}} - \frac{r^{2k}}{k}\Big), \qquad A_\theta(M) = \text{Re}  \sum_{k< e^M}\Big( \frac{X(k)r^ke^{ik\theta}}{\sqrt{k}} - \frac{r^{2k}}{k}\Big), 
\end{equation} 
and define for $M+1 \le m \leq \log K_r$, 
\begin{equation} 
\label{11.3} 
Z_0(m) = \text{Re} \sum_{e^{m-1} \leq k < e^{m}}\ \frac{X(k)}{\sqrt{k}} r^k ,
\qquad Z_\theta(m) = \text{Re} \sum_{e^{m-1} \leq k < e^{m}}  \frac{X(k)}{\sqrt{k}} r^ke^{ik\theta}. 
\end{equation}

Our goal is to bound the expected value of $|F(r)|^2 |F(re^{i\theta})|^2$ when restricted to the event ${\cL}(0) \cap \cL(\theta)$.  
Recall that $\cL(0) \cap \cL(\theta)$ is the event satisfying the inequalities (for $1\le n\le \log K_r$) 
$$ 
\sum_{k< e^n}  \Big( \text{Re } \frac{X(k)}{\sqrt{k}} r^k - \frac{r^{2k}}{k} \Big) \le A-5 \log n, \qquad 
\sum_{k< e^n}  \Big( \text{Re} \frac{X(k)}{\sqrt{k}} r^ke^{ik\theta}  -\frac{r^{2k}}{k}\Big)\le A-5 \log n. 
$$
 Since our goal is to obtain upper bounds, we replace the event $\cL(0) \cap \cL(\theta)$ with a less restrictive event which is easier to handle.  
This less restrictive event $\widetilde{\cL}$ is defined by the constraints 
\begin{equation} 
\label{11.4} 
A_0(M), A_\theta(M) \le A -5 \log M, 
\end{equation} 
together with, for $M+1 \leq m\le \log K_r$ 
\begin{equation} 
\label{11.5} 
\sum_{e^M \leq k < e^m} \Big( \text{Re }\frac{X(k)}{\sqrt{k}} r^k - \frac{r^{2k}}{k}\Big), \   \sum_{e^{M} \leq k < e^m} \Big( 
\text{Re } \frac{X(k)}{\sqrt{k}} r^k e^{ik\theta} - \frac{r^{2k}}{k} \Big) \le A - \min ( A_0(M), A_\theta(M),0). 
\end{equation}

Before entering into the details, let us give a loose description of the argument.  The values $k$ below $e^M$ are thought of as small, and here $e^{ik\theta}$ may be thought of as close to $1$.  The constraints imposed by ${\cL}(0)$ and $\cL(\theta)$ are strongly correlated for such $k$, and so are the quantities $A_0(M)$ and $A_\theta(M)$.   The ``barrier events" in \eqref{11.4} prevent the contribution of these small $k$ from getting too large.  In the range $e^M \le k \le K_r$, the oscillation of $e^{ik\theta}$ becomes significant, and the terms $Z_0(m)$ and $Z_\theta(m)$ behave almost independently of each other.  This allows us to think of the constraints in \eqref{11.5} as corresponding to two independent applications of the ballot problem, leading eventually to the saving of $\log K_r$ in Proposition \ref{prop10.2}. 
Lastly the terms with $K \ge k> K_r$ contribute a negligible amount as they are weighted down by the factor $r^k$ which is small in this range.

Returning to the proof, in the notation just introduced, we have 
\begin{align*}
\E \big[ \ind_{\cL(0)\cap \cL(\theta)} &|F(r)F(re^{i\theta})|^2\big]
\le \exp\Big( 4\sum_{k < e^M} \frac{r^{2k}}{k} \Big) \E\Big[ \ind_{\widetilde{\cL}} 
\exp( 2A_0(M) + 2A_{\theta}(M)) \\
&\prod_{M+1\le m\le \log K_r}  \exp( 2Z_0(m) +2 Z_\theta(m)) \exp\Big(2  \sum_{K_r \leq k\le K} \frac{r^{k}}{\sqrt{k}} 
\text{Re }(X(k) + X(k) e^{ik\theta}) \Big) \Big]. 
\end{align*}
We make a few initial simplifications to this quantity, before getting to the crux of the proof.  Note that the terms involving $X(k)$ with $K_r\leq k\le K$ are independent of the random variables with $k< K_r$, and are not constrained by \eqref{11.4} or \eqref{11.5}.   So we may separate these terms from our expression above, and they contribute 
$$  
\le \frac 12 \E \Big[ \exp\Big( 4 \sum_{K_r \leq k\le K} \text{Re} X(k) \frac{r^{k}}{\sqrt{k}} \Big) + 
 \exp\Big( 4 \sum_{K_r \leq k\le K} \text{Re} (X(k)e^{ik\theta})  \frac{r^{k}}{\sqrt{k}} \Big)\Big] 
 =\exp\Big( 4 \sum_{K_r \leq k \le K} \frac{r^{2k}}{k} \Big), 
 $$
since $\sum_{K_r \leq k \le K} \text{Re} (X(k)) r^{k}{\sqrt{k}}$ and $\sum_{K_r \leq k \le K} \text{Re }(X(k)e^{ik\theta}) r^{k}/\sqrt{k}$ are distributed like Gaussian random variables with mean $0$ and variance $\frac 12 \sum_{K_r \leq k\le K} r^{2k}/k$ (compare with \cref{lem3}).   Noting that 
$$ 
\sum_{k< e^M} \frac{r^{2k}}{k} \le \sum_{k< e^{M}} \frac 1k = M + O(1), \qquad \text{and } \sum_{K_r \leq k\le K} \frac{r^{2k}}{k} = O(1),
$$ 
we conclude that 
\begin{align} 
\label{11.6} 
\E \big[ \ind_{\cL(0)\cap \cL(\theta)} &|F(r)F(re^{i\theta})|^2\big]  \nonumber\\
&\ll e^{4M} \E \Big[ \ind_{\widetilde{\cL}} 
\exp( 2A_0(M) + 2A_{\theta}(M)) 
\prod_{M+1 \le m\le \log K_r}  \exp( 2Z_0(m) +2 Z_\theta(m)) \Big]. 
\end{align} 

We now state a proposition (to be proved in the next section) which amounts to two applications of the ballot problem, and granting this proposition, we will be able to finish the proof of \cref{prop10.2}.

\begin{proposition} 
\label{prop11.1}  Keep notations as above.  Given a real number $B$, let ${\mathcal E}$ denote the following event: for all $M+1 \le m \le \log K_r$ one has 
\begin{equation} 
\label{11.7} 
\sum_{e^M  \leq k < e^m} \Big( \mathrm{Re }\frac{X(k)}{\sqrt{k}} r^k - \frac{r^{2k}}{k}\Big), \ \  \sum_{e^{M} \leq k < e^m} \Big( 
\mathrm{Re } \frac{X(k)}{\sqrt{k}} r^k e^{ik\theta} - \frac{r^{2k}}{k} \Big) \le B. 
\end{equation}
Then 
$$ 
\E \Big[ \ind_{{\mathcal E}} \prod_{M+1 \le m \le \log K_r} \exp(2 Z_0(m)+ 2 Z_\theta(m)) \Big] 
\ll \frac{K_r^2}{e^{2M}} \Big( \frac{1+ \max(0, B)}{\sqrt{1+\log (K_r/e^M)}}\Big)^2. 
$$ 
\end{proposition}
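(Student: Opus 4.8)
The plan is to remove, via a change of measure, the factor $\prod_m\exp(2Z_0(m)+2Z_\theta(m))$ (whose expectation will supply the main term $K_r^2/e^{2M}$), and then to reduce the surviving probability to two applications of the ballot estimate \cref{lem:Harper}: one for the random walk assembled from $Z_0(m)$ and one for a decorrelated version of the walk assembled from $Z_\theta(m)$, as $m$ ranges over $M+1\le m\le\log K_r$. One checks that $M\le\log K_r-1$ always, so this range is nonempty and $\log(K_r/e^M)\ge1$; moreover one may assume $e^M\ge10^3/|\theta|$, since otherwise \eqref{11.1} forces $M=\log K_r-1$, the range collapses to a single $m$, and $\E[\ind_{\mathcal E}\exp(2Z_0(m)+2Z_\theta(m))]\le\E[\exp(2Z_0(m)+2Z_\theta(m))]\ll1\asymp K_r^2/e^{2M}$ already suffices.

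First I would carry out the tilting. For $M+1\le m\le\log K_r$ set $W_m=2Z_0(m)+2Z_\theta(m)$; these are independent mean-zero Gaussians, and writing $X(k)=u_k+iv_k$ a direct computation gives $\E[e^{W_m}]=\exp(2\mu_m)$ with $\mu_m=\sum_{e^{m-1}\le k<e^m}\frac{r^{2k}}{k}(1+\cos k\theta)$. Since $K_r\le-1/(4\log r)$ forces $e^{-1/2}\le r^{2k}\le1$ for $k\le K_r$, while $e^M|\theta|\ge10^3$ yields $\big|\sum_{k\ge e^M}\frac{r^{2k}\cos k\theta}{k}\big|\ll10^{-2}$ by partial summation against $|\sum_{k\le x}e^{ik\theta}|\le\pi/|\theta|$, it follows that $\sum_m\mu_m=\log(K_r/e^M)+O(1)$ and hence $\prod_m\E[e^{W_m}]\asymp K_r^2/e^{2M}$. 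Letting $\P'$ be the probability measure with density $\prod_m e^{W_m}/\prod_m\E[e^{W_m}]$ — under which $(u_k,v_k)$ remains Gaussian with the same covariance but a shifted mean — one gets
\[
\E\Big[\ind_{\mathcal E}\prod_{M+1\le m\le\log K_r}e^{W_m}\Big]=\Big(\prod_m\E[e^{W_m}]\Big)\,\P'[\mathcal E]\asymp\frac{K_r^2}{e^{2M}}\,\P'[\mathcal E],
\]
so it remains to show $\P'[\mathcal E]\ll\big(1+\max(0,B)\big)^2/\big(1+\log(K_r/e^M)\big)$.

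Next comes the decorrelation. Under $\P'$ the pair $(Z_0(m),Z_\theta(m))$ is jointly Gaussian with common mean $\mu_m$, variances $\nu_m/2$ where $\nu_m=\sum_{e^{m-1}\le k<e^m}r^{2k}/k\asymp1$, and covariance $c_m/2$ where $c_m=\sum_{e^{m-1}\le k<e^m}\frac{r^{2k}\cos k\theta}{k}$; partial summation (using $e^{m-1}\ge e^M\ge10^3/|\theta|$) gives $|c_m|\ll10^{-3}e^{-(m-1-M)}$, so $\sum_m|c_m|\ll10^{-3}$ and $\sum_m c_m^2\ll10^{-6}$. With $S_0(m)=\sum_{M+1\le m'\le m}(Z_0(m')-\nu_{m'})$ and $S_\theta(m)$ defined likewise, $\mathcal E=\{S_0(m)\le B\text{ and }S_\theta(m)\le B,\ \forall\,m\}$. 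Put $\widetilde Z_\theta(m)=Z_\theta(m)-\tfrac{c_m}{\nu_m}Z_0(m)$; this is Gaussian of variance $\asymp1$ and, being uncorrelated with every $Z_0(m')$, the family $(\widetilde Z_\theta(m))_m$ is (under $\P'$) independent of $(Z_0(m'))_{m'}$. Writing $\widetilde S_\theta(m)=\sum_{M+1\le m'\le m}(\widetilde Z_\theta(m')-\nu_{m'})$ and $R(m)=\sum_{M+1\le m'\le m}\tfrac{c_{m'}}{\nu_{m'}}Z_0(m')$ we have $S_\theta=\widetilde S_\theta+R$, and under $\P'$ the Gaussian $R(m)$ has mean $O(10^{-3})$ and variance $O(10^{-6})$; hence $\P'[|R(m)|>10\log(m-M+2)]$ is super-exponentially small, and on the complement of $\bigcup_m\{|R(m)|>10\log(m-M+2)\}$ one has
\[
\mathcal E\subseteq\{S_0(m)\le B\ \forall m\}\cap\{\widetilde S_\theta(m)\le B+10\log(m-M+2)\ \forall m\},
\]
with the two events on the right independent because they depend on the disjoint families $(Z_0(m'))$ and $(\widetilde Z_\theta(m'))$.

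Finally I would invoke \cref{lem:Harper} twice. Reindexing by $j=m-M$ over a set of size $\asymp1+\log(K_r/e^M)$, the increments of $S_0$ are Gaussian of variance in $[1/20,20]$ and mean $c_{M+j}$; absorbing the constant $\sum_{m'\le m}c_{m'}=O(10^{-3})$ into the barrier and the rest into an admissible $h(j)$ with $h(1)=0$ (and replacing the barrier constant by $1$ if it is smaller, which only weakens the constraint and is harmless since then $1+\max(0,B)\asymp1$), \cref{lem:Harper} gives $\P'[S_0(m)\le B\ \forall m]\ll(1+\max(0,B))/\sqrt{1+\log(K_r/e^M)}$. The same argument, with the barrier $B+10\log(m-M+2)$ rewritten as $(B+10\log3)+10\log\tfrac{j+2}{3}$ (legitimate since $10\log\tfrac{j+2}{3}\le10\log j$ for $j\ge1$), gives the matching bound for $\widetilde S_\theta$. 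Multiplying the two and adding the negligible contribution of $\bigcup_m\{|R(m)|>10\log(m-M+2)\}$ proves the required bound on $\P'[\mathcal E]$, and with the tilting identity above this establishes \cref{prop11.1}. The substantive step is the decorrelation: the choice $e^M\ge10^3/|\theta|$ in \eqref{11.1} is precisely what makes the correlations $c_m$ decay geometrically, so that the correction walk $R$ has total variance $O(10^{-6})$ and gets absorbed into the slack $10\log(m-M+2)$ that \cref{lem:Harper} permits — once past this point, only independence and two invocations of \cref{lem:Harper} remain.
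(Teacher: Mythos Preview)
Your argument is correct, and the overall architecture (tilt, decorrelate, apply \cref{lem:Harper} twice) matches the paper's, but the \emph{decorrelation} step is carried out differently.

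The paper never forms the regression residual $\widetilde Z_\theta(m)=Z_\theta(m)-(c_m/\nu_m)Z_0(m)$.  Instead it uses a density-domination trick: for a bivariate normal with correlation $\rho_m$, the joint density is pointwise at most $\sqrt{(1+|\rho_m|)/(1-|\rho_m|)}$ times the density of an \emph{independent} pair with variances inflated by $1+|\rho_m|$.  Since $\sum_m|\rho_m|\ll 1$, the product of these correction factors is $O(1)$, and one passes in a single stroke to genuinely independent walks $\widetilde Z_0(m),\widetilde Z_\theta(m)$.  Only after this does the paper complete the square (your tilting step).  This bypasses your correction walk $R(m)$ entirely: no union bound over $\{|R(m)|>10\log(m-M+2)\}$ is needed, and there is nothing to absorb into the $h(j)$ slack of \cref{lem:Harper}.

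Your route is also valid and arguably more ``probabilistic,'' but it carries a bit more bookkeeping.  Two places where your write-up is slightly loose: (i) the claim $\sum_m\nu_m=\log(K_r/e^M)+O(1)$ does not follow from $e^{-1/2}\le r^{2k}\le 1$ alone---you need the sharper $1-r^{2k}\le 2k|\log r|\le k/(2K_r)$ to get additive $O(1)$ (only the upper bound is actually used, and that follows from $r^{2k}\le1$); (ii) after tilting, the increments $Z_0(m')-\nu_{m'}$ have mean $c_{m'}\ne0$, so to invoke \cref{lem:Harper} you must recenter and absorb $\sum_{m'\le m}c_{m'}=O(1)$ into the barrier, which you gesture at but do not quite spell out.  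Both are routine to patch.
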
 

Assuming this proposition, we now resume the proof of \cref{prop10.2}, starting from \eqref{11.6}.  Applying \cref{prop11.1} with $B= A - \min( A_0(M), A_\theta(M),0)$ we obtain 
\begin{align*}
 \E \big[ &\ind_{\cL(0)\cap \cL(\theta)} |F(r)F(re^{i\theta})|^2\big] 
\nonumber \\
&\ll  \frac{K_r^2 e^{2M}}{1+\log(K_r/e^M)}  \E \Big[ \ind_{\widetilde{\cL}} \exp( 2A_0(M) + 2A_{\theta}(M)) \big(A + \max(-A_0(M), -A_\theta(M),0) \big)^2\Big]. 
\end{align*}
Here we have abused notation a little, and the event $\widetilde{\cL}$ refers now only to the constraint \eqref{11.4} on the variables $X(k)$ with $k< e^M$. Notice also that we have  used \cref{prop11.1} treating $X(k)$ for $k < e^M$ as fixed. By rotational symmetry, we may assume that $A_0(M) \le A_\theta(M)$; the other case contributes an identical amount.  Then bounding $A_\theta(M)$ by $A- 5 \log M$, we conclude that 
\begin{align} 
\label{11.8} 
\E \big[ &\ind_{\cL(0)\cap \cL(\theta)} |F(r)F(re^{i\theta})|^2\big] 
\nonumber \\
&\ll  \frac{K_r^2 e^{2M}}{1+\log(K_r/e^M)} \frac{e^{2A}}{M^{10}}  \E \Big[ \ind_{A_0(M)\le A-5\log M} \exp( 2A_0(M))  \big( A+ \max(-A_0(M),0) \big)^2\Big]. 
\end{align}

Now $\text{Re }\sum_{k< e^M} X(k) r^k/\sqrt{k}$ is a real Gaussian with mean $0$ and variance $\frac 12 \sum_{k< e^M} r^{2k}/{k}$.  Therefore, using also that $\sum_{k< e^M} r^{2k}/k = M + O(1)$, 
\begin{align*}
 \E \Big[ &\ind_{A_0(M)\le A-5\log M} \exp( 2A_0(M)) \big( A+\max(-A_0(M),0)\big)^2\Big] \\
 & = 
 \frac{1}{\sqrt{\pi \sum_{k< e^M} r^{2k}/k}} \int_{-\infty}^{A-5\log M} e^{2x} (A+ \max(-x,0))^2 \exp\Big(-\frac{(x+\sum_{k< e^M}r^{2k}/k)^2}{\sum_{k< e^M} r^{2k}/k} \Big) dx \\ 
 &\ll (A^2+M) e^{-M}. 
 \end{align*}
Inserting this in \eqref{11.8}, we conclude that 
$$ 
\E \big[ \ind_{\cL(0)\cap \cL(\theta)} |F(r)F(re^{i\theta})|^2\big]  \ll 
\frac{K_r^2 e^M}{1+\log (K_r/e^M)} \frac{e^{2A}}{M^{10}} (A^2+M) \ll \frac{K_r^2 e^M}{1+\log (K_r/e^M)} \frac{A^2 e^{2A}}{M^9}. 
$$
Upon recalling the definition of $M$ in \eqref{11.1}, and noting that $M(1 +\log (K_r/e^M)) \ge \log K_r$, this completes the proof of \cref{prop10.2}.  \hfill \qed

%
%
\section{Proof of \cref{prop11.1}} 
 
The proof of the lower bound for \cref{thm:main} has been reduced to \cref{prop11.1}. Here we are focussing on the range $e^M \le K \le K_r$ where there is substantial oscillation in the terms $e^{ik\theta}$, and we shall see that the variables $Z_0(m)$ and $Z_{\theta}(m)$ for $M+1 \leq m \leq \log K_r$ are largely uncorrelated (or more precisely, very weakly correlated). By exploiting properties of bivariate Gaussian vectors, these weakly correlated Gaussians may be replaced with \textit{independent} Gaussians. Thus, the expectation in \cref{prop11.1} will essentially  split into two independent Gaussian random walks where an analysis similar to \cref{sec:BallotSavings}  will ultimately carry over. 
 
We first dispense with the case when $\log (K_r/e^M) \le 10$.  Note that 
\begin{align*}
\E \Big[ \ind_{\mathcal E} &\prod_{M+1 \le m\le \log K_r} \exp(2Z_0(m) +2Z_\theta(m)) \Big] \\
&\le \frac 12 \Big( \E\Big[  \prod_{M+1 \le m\le \log K_r} \exp(4Z_0(m)) \Big] + \E\Big[  \prod_{M+1 \le m\le \log K_r} \exp(4Z_\theta(m))\Big]\Big) \\ 
&=  \E\Big[  \prod_{M+1 \le m\le \log K_r} \exp(4Z_0(m)) \Big] = \E \Big[ \exp\Big( 4\text{Re }\sum_{e^M \le k < K_r} \frac{X(k)r^{k}}{\sqrt{k}}\Big)\Big],
\end{align*}
by rotational symmetry.  Now $\sum_{e^M \le k <K_r} \text{Re }(X(k)r^k/\sqrt{k})$ is distributed like a Gaussian random variable with mean $0$ and variance $\frac 12 \sum_{e^M \le k <K_r} r^{2k}/k$, and this variance is bounded by our assumption that $\log (K_r/e^M) \le 10$.  It follows that in this case, our desired quantity is bounded by an absolute constant, and the  proposition follows at once.  

Therefore we may assume that $K_r > e^{M+10}$ below.  Upon recalling the definition of $M$ (see \eqref{11.1}) we may thus assume that $\theta \in (-\pi, \pi]$ satisfies $10^3/|\theta| > K_r/e$ and that 
\[
e^M |\theta| \ge 10^3. 
\]
With this easy case out of the way, we now embark on the proof proper. 

For $M+1\le m\le \log K_r$ note that the variables $Z_0(m)$ and $Z_\theta(m)$ depend only on $X(k)$ for $e^{m-1} \le k < e^{m}$, and so these variables $Z_0(m)$ and $Z_\theta(m)$ are independent for different values of $m$.  We therefore begin by discussing, for a given $M+1\le m\le \log K_r$, the probability that $Z_0(m)$ and $Z_\theta(m)$ satisfy some event, and then by combining those results for different $m$ we will obtain \cref{prop11.1}.

We recall that a pair of real random variables $(Y_1, Y_2)$ is said to have a bivariate normal distribution if every linear combination $a_1 Y_1 + a_2 Y_2$ with $a_1, a_2 \in {\Bbb R}$ is a univariate normal random variable.  The bivariate normal distribution is determined by the means $\mu_1 =\E[Y_1]$ and $\mu_2 = \E [Y_2]$ together with the $2\times 2$ (symmetric) covariance matrix $\E[Y_i Y_j]$ for $1\le i, j\le 2$.  Denote by $\sigma_i^2 = \E [Y_i^2]$ for $i=1$, $2$, and by $\rho \sigma_1 \sigma_2$ the covariance $\E [Y_1Y_2]$ so that $|\rho| \le 1$.  For a bivariate normal vector $(Y_1, Y_2)$ with these parameters, the probability density at $(x_1, x_2) \in {\Bbb R}^2$ is given by 
\begin{equation} 
\label{12.1} 
\small
\frac{1}{2\pi \sigma_1 \sigma_2 \sqrt{1-\rho^2}} \exp\Big( - \frac{1}{2(1-\rho^2)} \Big( \Big(\frac{x_1-\mu_1}{\sigma_1}\Big)^2 -2 \rho \Big(\frac{x_1-\mu_1}{\sigma_1} \Big) \Big( \frac{x_2-\mu_2}{\sigma_2}\Big) + \Big( \frac{x_2 -\mu_2}{\sigma_2}\Big)^2 \Big)\Big). 
\end{equation} 
Here we ignore the degenerate case when $|\rho|=1$.

If $(Y_1,Y_2)$ is a bivariate normal vector, then in general $Y_1$ and $Y_2$ need not be independent, and indeed the case when they are independent corresponds to requiring the covariance $\E[Y_1Y_2]$ to be $0$ (equivalently $\rho=0$ above).  We next observe that even in the general case, we can upper bound the probability density in \eqref{12.1} by replacing $(Y_1,Y_2)$ by a suitable pair of independent normal variables.  This is especially useful when the covariance parameter $\rho$ is small, which will be the case for us.  

Suppose $(Y_1,Y_2)$ is a bivariate normal vector, with probability density as in \eqref{2.1}.   Since 
$$ 
\Big| 2\rho \Big( \frac{x_1-\mu_1}{\sigma_1}\Big) \Big( \frac{x_2-\mu_2}{\sigma_2}\Big)\Big| 
\le |\rho| \Big(  \Big( \frac{x_1-\mu_1}{\sigma_1}\Big)^2+ \Big( \frac{x_2-\mu_2}{\sigma_2}\Big)^2\Big), 
$$ 
we see that the probability density in \eqref{12.1} is bounded above by 
\begin{equation} 
\label{12.2} 
\frac{\sqrt{1+|\rho|}}{\sqrt{1-|\rho|}} \frac{1}{2\pi \sigma_1\sigma_2 (1+|\rho|)} \exp\Big(-\frac 1{2(1+|\rho|)} \Big( \Big(\frac{x_1-\mu_1}{\sigma_1}\Big)^2 + \Big(\frac{x_2-\mu_2}{\sigma_2}\Big)^2 \Big). 
\end{equation}
Apart from the factor $\sqrt{(1+|\rho|)/(1-|\rho|)}$, the quantity above is the probability density of a pair of independent normal variables 
$(\widetilde{Y_1},\widetilde{Y_2})$ with means $\mu_1$, $\mu_2$, and variances $\sigma_1^2 (1+|\rho|)$, $\sigma_2^2(1+|\rho|)$.  Thus given any event ${\mathcal B}$ (thought of as a Borel measurable subset of ${\mathbb R}^2$) we have 
\begin{equation} 
\label{12.3} 
\P[(Y_1,Y_2) \in {\mathcal B}] \le \frac{\sqrt{1+|\rho|}}{\sqrt{1-|\rho|}} \P [(\widetilde{Y}_1,\widetilde{Y}_2) \in {\mathcal B}]. 
\end{equation}

With this preliminary discussion in place, we are now ready to handle \cref{prop11.1}.  Since Re$(X(k))$ and Im$(X(k))$ are independent normal variables, it follows that 
$$
(\text{Re}(X(k), \text{Re}(X(k)e^{ik\theta})) = (\text{Re}(X(k)), \cos(k\theta) \text{Re}(X(k)) - \sin(k\theta) \text{Im}(X(k)))
$$ is a bivariate normal vector.  Being a linear combination of independent such vectors we see that $(Z_0(m), Z_\theta(m))$ is also a bivariate normal vector.  Note that both $Z_0(m)$ and $Z_\theta(m)$ have mean $0$, variance 
\begin{equation} 
\label{12.4} 
\E[|Z_0(m)|^2] = \E[|Z_\theta(m)|^2] = \sigma_m^2 = \sum_{e^{m-1} \leq k < e^{m}} \frac{r^{2k}}{2k}
\end{equation} 
and covariance 
\begin{equation} 
\label{12.5} 
\E[Z_0(m) Z_\theta(m)] = \rho_m(\theta) \sigma_m^2 = \sum_{e^{m-1} \leq k < e^{m}} \frac{r^{2k}}{2k} \cos(k\theta). 
\end{equation} 

   
In the range $M+1 \leq m \leq \log K_r$, 
\begin{equation}
\label{12.6}
\frac{1}{4} \leq \sum_{e^{m-1} \leq k \le e^{m}} \frac{1}{4k}  \leq \sigma_m^2 \leq \sum_{e^{m-1} \leq k < e^{m}} \frac{1}{2k} \leq \frac{1}{2} + \frac{1}{2e^{m-1}}	. 
\end{equation}
because $\frac{1}{2} \leq r^{2k} \leq 1$ for $k \leq K_r  $. Further 
the covariance satisfies 
\begin{equation}
\label{12.7}
|\rho_m(\theta) \sigma_m^2| = \Big| \sum_{e^{m-1} \leq k < e^{m}}r^{2k} \frac{\cos(k\theta)}{2k} \Big| =\Big|  \int_0^r \sum_{e^{m-1} \le k < e^m} t^{2k-1} e^{ik\theta} dt\Big|  
\le \frac{\pi}{|\theta| e^{m-1}}, 
\end{equation}
since by summing the geometric series, and using $|1-te^{i\theta}| \ge |\sin (\theta/2)| \ge |\theta|/\pi$ (for all $0\le t\le 1$ and $\theta \in (-\pi, \pi]$),   we may see that 
$$ 
\Big|  \sum_{e^{m-1} \le k < e^m} t^{2k-1} e^{ik\theta} \Big| \le \frac{2t^{2\lceil e^{m-1}\rceil -1}}{|1- te^{i\theta}|} \le \frac{\pi}{|\theta|}  2t^{2\lceil e^{m-1}\rceil -1}.
$$ 
Thus in this range $|\rho_m| \le 4\pi/(|\theta| e^{m-1})$ is small, being always $\le 4\pi/10^3 < 1/10$.  
 
Now define independent normal random variables $\widetilde{Z}_0(m)$ and $\widetilde{Z}_\theta(m)$ distributed identically with mean $0$ and variance $\sigma_m^2 (1+ |\rho_m(\theta)|)$.   As noted in \eqref{12.2} and \eqref{12.3} we obtain that for any event ${\mathcal B}$ (thought of as a  Borel measurable subset of ${\Bbb R}^2$) we have 
\begin{equation*} 
\E \Big[ \ind_{\mathcal B} \exp( 2 (Z_0(m) + Z_\theta(m))\Big] \le 
\frac{\sqrt{1+|\rho_m(\theta)|}}{\sqrt{1-|\rho_m(\theta)|}} \E \Big[ \ind_{\mathcal B} \exp( 2(\widetilde{Z}_0(m) + \widetilde{Z}_\theta(m))\Big]. 
\end{equation*} 
Applying this to all $M+1 \le m \le \log K_r$ (and recalling that $Z_0(m)$ and $Z_\theta(m)$ are independent for different values of $m$) we conclude that 
\begin{align} \label{12.8} 
\E \Big[ \ind_{\mathcal E} \prod_{M+1\le m\le \log K_r} \exp( 2 (Z_0(m) + Z_\theta(m))\Big]  &\le  
\prod_{M+1\le m\le \log K_r} \frac{\sqrt{1+|\rho_m(\theta)|}}{\sqrt{1-|\rho_m(\theta)|}} \nonumber \\
&\times \E \Big[ \ind_{\mathcal E} \prod_{M+1\le m\le \log K_r} \exp( 2 (\widetilde{Z}_0(m) + \widetilde{Z}_\theta(m))\Big] \nonumber \\ 
&\ll  \E \Big[ \ind_{\mathcal E} \prod_{M+1\le m\le \log K_r} \exp( 2 (\widetilde{Z}_0(m) + \widetilde{Z}_\theta(m))\Big].
\end{align}
Here we estimated $\prod_{M+1\le m\le \log K_r} \sqrt{(1+|\rho_m(\theta)|}/\sqrt{(1-|\rho_m(\theta)|)}$ as $\ll 1$ using \eqref{12.6}, \eqref{12.7}, and our assumption that $e^M |\theta| \geq 10^{3}$.
Let us also clarify that the event ${\mathcal E}$ denotes (on the left side of \eqref{12.8}) the inequalities given in \eqref{11.7}, and on the right side of \eqref{12.8} these inequalities amount to, for all $M+1 \le m\le \log K_r$ 
\begin{equation} 
\label{12.9} 
\sum_{M+1 \le \ell \le m} {\widetilde Z_0}(\ell), \sum_{M+1\le \ell \le m} \widetilde{Z}_\theta(\ell) \le B +\sum_{e^M \le k < e^m} \frac{r^{2k}}{k}= B+\sum_{M\le \ell \le m} 2\sigma_\ell^2. 
\end{equation}

Since ${\widetilde Z}_0(m)$ and $\widetilde{Z}_\theta(m)$ are independent,  the right side of \eqref{12.8} equals 
$$ 
\Big( \E \Big[ \ind_{\mathcal E} \prod_{M+1\le m\le \log K_r} \exp(2 \widetilde{Z}_0(m)) \Big] \Big)^2, 
$$ 
where now by ${\mathcal E}$ we understand the constraints in \eqref{12.9} holding just for $\widetilde{Z}_0(m)$.  If we put $Y_m= \widetilde{Z}_0(m) - 2\sigma_m^2 (1+|\rho_m(\theta)|)$ then (completing the square as in Section 7) we obtain 
\begin{align*}
 \E \Big[ \ind_{\mathcal E}&\prod_{M+1\le m\le \log K_r} \exp(2 \widetilde{Z}_0(m)) \Big] = 
 \exp\Big( \sum_{M+1\le m\le \log K_r} 2\sigma_m^2 (1+|\rho_m(\theta)|) \Big) \\ 
&\times  \intmult_{\mathcal{E}'}  \exp\Big( - \sum_{M+1\le m \le \log K_r} \frac{Y_m^2}{2\sigma_m^2 (1+|\rho_m(\theta)|)} \Big) \prod_{M+1\le m\le \log K_r} 
  \frac{dY_m}{\sqrt{2\pi \sigma_m^2 (1+|\rho_m(\theta)|)}}, 
  \end{align*} 
 where ${\mathcal E}'$ now denotes the constraint  
 $$ 
 \sum_{M+1 \le \ell \le m}  Y_m \le B + \sum_{M+1\le \ell \le m} \big( 2\sigma_\ell^2 - 2 \sigma_\ell^2 (1+ |\rho_\ell(\theta)|) \big) = B- \sum_{M+1\le \ell \le m} 2\sigma_\ell^2 |\rho_\ell(\theta)|.  
 $$ 
We conclude that  
 \begin{align*} 
 \E \Big[ \ind_{\mathcal E}&\prod_{M+1\le m\le \log K_r} \exp(2 \widetilde{Z}_0(m)) \Big] \le  \exp\Big( \sum_{M+1\le m\le \log K_r} 2\sigma_m^2 (1+|\rho_m(\theta)|) \Big)\\ 
 &\times  \P \Big[ \sum_{M+1 \le \ell \le m}  Y_m \le B \text{  for  all  } M+1 \le m\le \log K_r \Big] \\ 
 &\ll  \exp\Big( \sum_{M+1\le m\le \log K_r} 2\sigma_m^2 (1+|\rho_m(\theta)|) \Big) \frac{ 1+ \max(0, B)}{\sqrt{1+ \log (K_r/e^M)}}, 
 \end{align*} 
  upon appealing to \cref{lem:Harper}.  Finally recalling \eqref{12.4} and \eqref{12.7} we have 
  $$ 
   \sum_{M+1\le m\le \log K_r} 2\sigma_m^2 (1+|\rho_m(\theta)|) = \sum_{e^M \le k < K_r} \frac{r^{2k}}{k} + O(1) = 
   \log K_r - M  + O(1).  
   $$ 
 This establishes \cref{prop11.1}, and hence the lower bound in \cref{thm:main}.    \hfill \qed

\bibliographystyle{plain}
\bibliography{bibtex_library}{}
\end{document}